	\setlist{nosep} 
\definecolor{darkgreen}{rgb}{0.0, 0.5, 0}
\newcommand*\Let[2]{\State #1 $\gets$ #2}
\newtheorem{lemma}{Lemma}
\newtheorem{remark}{Remark}
\newtheorem{assumption}{Assumption}
\newtheorem{theorem}{Theorem}
\newtheorem{corollary}{Corollary}
\crefname{assumption}{Assumption}{Assumptions}
\DeclareMathOperator{\argmin}{argmin }
\DeclareMathOperator{\adj}{adj}
\DeclareMathOperator{\cL}{\widehat{\mathcal{L}}}
\DeclareMathOperator{\cLs}{\widehat{\mathcal{L}}^2}
\DeclareMathOperator{\bVert}{\big\Vert}
\newcommand{\TheTitle}{Fast solution of fully implicit Runge-Kutta and discontinuous
	Galerkin in time for numerical PDEs, Part I: the linear setting}
\newcommand{\TheAuthors}{B.S. Southworth, O.A. Krzysik, W. Pazner, and H. De Sterck}
\title{{\TheTitle}\thanks{BSS was supported by Lawrence Livermore National
      Laboratory under contract B639443, and as a Nicholas C. Metropolis Fellow
      under the Laboratory Directed Research and Development program of Los
      Alamos National Laboratory. OAK acknowledges the support of an Australian
      Government Research Training Program (RTP) Scholarship.}}
\author{
	Ben S. Southworth\thanks{Theoretical Division, Los Alamos National Laboratory,
    U.S.A. (\url{southworth@lanl.gov}),
    \url{http://orcid.org/0000-0002-0283-4928}}
\and
    Oliver A. Krzysik\thanks{School of Mathematics, Monash University,
  	Australia (\url{oliver.krzysik@monash.edu}),
  	\url{https://orcid.org/0000-0001-7880-6512}}
\and
  	Will Pazner\thanks{Center for Applied Scientific Computing, Lawrence Livermore National Laboratory,
    U.S.A. (\url{pazner1@llnl.gov})}
\and
    Hans De Sterck\thanks{Department of Applied Mathematics,
  	University of Waterloo,
  	Waterloo, Canada
  	(\url{hdesterck@uwaterloo.ca})}
}
\begin{document}
\maketitle
\allowdisplaybreaks

\begin{abstract}
Fully implicit Runge-Kutta (IRK) methods have many desirable properties as time
integration schemes in terms of accuracy and stability, but {high-order IRK methods
are not commonly} used in practice with numerical PDEs due to the difficulty of solving the stage
equations. This paper introduces a theoretical and algorithmic preconditioning
framework for solving the systems of equations that arise from IRK
methods applied to linear numerical PDEs (without algebraic constraints). This
framework also naturally applies to discontinuous Galerkin discretizations in
time. Under quite general assumptions on the spatial discretization that yield
stable time integration, the preconditioned operator is proven to {have condition
number bounded by a small, order-one constant, independent of the spatial mesh
and time-step size,} and with only weak dependence on number of
stages/polynomial order; for example, the preconditioned operator for 10th-order
Gauss IRK has condition number less than two, \emph{independent of the spatial
discretization {and time step.}} The new method can be used with arbitrary existing
preconditioners for backward Euler-type time stepping schemes, and is amenable
to the use of three-term recursion Krylov methods when the underlying spatial
discretization is symmetric. The new method is demonstrated to be effective on
various high-order finite-difference and finite-element discretizations of
linear parabolic and hyperbolic problems, demonstrating fast, scalable solution
of up to 10th order accuracy. {The new method consistently outperforms existing
block preconditioning approaches, and in several cases, the new method can achieve
4th-order accuracy using Gauss integration with roughly half the number of
preconditioner applications and wallclock time as required using standard
diagonally implicit RK methods.}
\end{abstract}

\section{Introduction}\label{sec:intro}

\subsection{Fully implicit Runge-Kutta}\label{sec:intro:irk}

Consider the method-of-lines approach to the numerical solution of linear
partial differential equations (PDEs), where we discretize in space and arrive
at a system of ordinary differential equations (ODEs) in time,

\begin{align*}
M\mathbf{u}'(t) =  \mathcal{L}\mathbf{u} + \hat{\mathbf{f}}(t)
	\quad\text{in }(0,T], \quad \mathbf{u}(0) = \mathbf{u}_0,
\end{align*}

where $M$ is a mass matrix, $\mathcal{L}\in\mathbb{R}^{N\times N}$ a discrete
linear operator, and $\hat{\mathbf{f}}(t)$ a time-dependent forcing
function.\footnote{Note,
PDEs with an algebraic constraint, such as the divergence-free
constraint in the Stokes equations, instead yield a differential algebraic equation (DAE), which
requires separate careful treatment, and is addressed in a companion paper along with
nonlinearities \cite{irk2}.}
Then, consider time propagation using an $s$-stage Runge-Kutta scheme,
characterized by the Butcher tableau

\begin{align*}
	\renewcommand\arraystretch{1.2}
	\begin{array}
	{c|c}
	\mathbf{c}_0 & A_0\\
	\hline
	& \mathbf{b}_0^T
	\end{array},
\end{align*}

with Runge-Kutta matrix $A_0 = \{a_{ij}\} \in\mathbb{R}^{s\times s}$,
weight vector $\mathbf{b}_0^T = (b_1, \ldots, b_s)^T$, and quadrature
nodes $\mathbf{c}_0 = (c_1, \ldots, c_s)$.

Runge-Kutta methods update the solution using a sum over stage vectors,

\begin{align}\label{eq:update}
\mathbf{u}_{n+1} & = \mathbf{u}_n + \delta t \sum_{i=1}^s b_i\mathbf{k}_i, \\
M\mathbf{k}_i & = \mathcal{L}
	\left(\mathbf{u}_n + \delta t\sum_{j=1}^s a_{ij}\mathbf{k}_j\right) +
	\mathbf{f}(t_n+\delta tc_i).\label{eq:stages}
\end{align}

The stage vectors $\{\mathbf{k}_i\}$ can then be expressed as the solution of the block linear
system,

\begin{align}\label{eq:k0}
\left( \begin{bmatrix} M  & & \mathbf{0} \\ & \ddots \\ \mathbf{0} & & M\end{bmatrix}
	- \delta t \begin{bmatrix} a_{11}\mathcal{L} & ... & a_{1s}\mathcal{L} \\
	\vdots & \ddots & \vdots \\ a_{s1}\mathcal{L} & ... & a_{ss} \mathcal{L} \end{bmatrix} \right)
	\begin{bmatrix} \mathbf{k}_1 \\ \vdots \\ \mathbf{k}_s \end{bmatrix}
& = \begin{bmatrix} \mathbf{f}_1 \\ \vdots \\ \mathbf{f}_s \end{bmatrix},
\end{align}

where $\mathbf{f}_i := \hat{\mathbf{f}}(t_n+\delta tc_i) + \mathcal{L}(t_n+\delta tc_i)\mathbf{u}_n$.
{Since $\mathcal{L}$ is the same at all stages (independent of time), \eqref{eq:k0}
is often expressed in the equivalent compact Kronecker product form}

\begin{align}\label{eq:kron1}
(I\otimes M - \delta t A_0\otimes \mathcal{L})\mathbf{k} & = \mathbf{f}.
\end{align}

The difficulty in fully implicit Runge-Kutta methods (which we will denote IRK) lies in
solving the $Ns\times Ns$ block linear system in \eqref{eq:k0}. This paper focuses on the
simulation of numerical PDEs, where $N$ is typically very large
and $\mathcal{L}$ is highly ill-conditioned. In such cases, direct
solution techniques to solve \eqref{eq:k0} are not a viable option, and fast, parallel
preconditioned iterative methods must be used ({with an effective preconditioner
being the key)}. However, higher-order IRK methods are not commonly employed
in practice due to the difficulties of solving \eqref{eq:k0}. Even for relatively simple
parabolic PDEs where $-\mathcal{L}$ is symmetric positive definite (SPD), \eqref{eq:k0}
is a large nonsymmetric matrix with significant block coupling. {It is well-known in
preconditioning fields, including (algebraic) multigrid, sparse approximate inverses,
block preconditioning, etc., that nonsymmetric operators and/or systems with block
structure (particularly more than two blocks/variables) generally introduce significant
difficulties, and many methods do not extend well to nonsymmetric systems and/or block
systems.}

\begin{remark}[Discontinuous Galerkin (DG) in time]\label{rem:dg}
DG-in-time discretizations of systems of linear
ODEs give rise to linear algebraic systems of the form
\begin{equation} \label{eq:dg-in-time}
	\left( \begin{bmatrix}
		\delta_{11} M  & & \delta_{1s} M \\
		& \ddots \\
		\delta_{s1} M & & \delta_{ss} M
	\end{bmatrix}
	- \delta t \begin{bmatrix}
		t_{11}\mathcal{L}_1 & ... & t_{1s}\mathcal{L}_1 \\
		\vdots & \ddots & \vdots \\
		t_{s1}\mathcal{L}_s & ... & t_{ss} \mathcal{L}_s
	\end{bmatrix} \right)
		\begin{bmatrix} \mathbf{u}_1 \\ \vdots \\ \mathbf{u}_s \end{bmatrix}
		= \begin{bmatrix} \mathbf{r}_1 \\ \vdots \\ \mathbf{r}_s \end{bmatrix}.
\end{equation}
The coefficients $T = \{t_{ij}\}$ correspond to a temporal mass matrix, the
coefficients $\boldsymbol{\delta}= \{\delta_{ij}\}$ correspond to a DG weak
derivative with upwind numerical flux, and the unknowns $\mathbf{u}_i$ are the
coefficients of the polynomial expansion of the approximate solution (for
example, see \cite{hn,Akrivis2011,Lasaint1974,Makridakis2006}). Both of the
coefficient matrices $T, \boldsymbol{\delta} \in \mathbb{R}^{s\times s}$ are
invertible. It can be seen that the algebraic form of the DG in time
discretization is closely related to the implicit Runge-Kutta system
\eqref{eq:k0}. In fact, {in the case where $\mathcal{L}_i = \mathcal{L}$ for
all $i$}, the system \eqref{eq:dg-in-time} can be recast in the form of
\eqref{eq:k0} using the invertibility of $\boldsymbol{\delta}$, with $A_0 =
\boldsymbol{\delta}^{-1} T$. In particular, the degree-$p$ DG method using
$(p+1)$-point Radau quadrature, which is exact for polynomials of degree $2p$,
is equivalent to the Radau IIA collocation method \cite{Makridakis2006}, which
is used for many of the numerical results in \Cref{sec:numerics}. Thus, although
the remainder of this paper focuses on fully implicit Runge-Kutta, the
algorithms developed here can also be applied to DG discretizations in time on
fixed slab-based meshes.
\end{remark}

\subsection{Outline}\label{sec:intro:outline}

This paper develops a novel framework for the solution of fully
implicit Runge-Kutta methods and DG discretizations in time for linear numerical PDEs,
with theoretically guaranteed preconditioning, \emph{independent of the spatial
discretization.} The new method requires the preconditioning of $s$ real-valued matrices
of the form $\gamma M - \delta t\mathcal{L}$ for some $\gamma > 0$, analogous to the
matrices that arise in backward Euler integration, and is easily implemented using existing
preconditioners and parallel software libraries.

\Cref{sec:intro:hist} provides background on why IRK methods are desirable over the simpler and more
commonly used diagonally implicit Runge-Kutta (DIRK) methods, and also provides some historical
context for the preconditioners developed in this work. \Cref{sec:intro:stab} then briefly discusses
stable integration from a method-of-lines perspective and introduces two key elements that will be
used throughout the paper.

\Cref{sec:solve} introduces the new theoretical and algorithmic
framework for solving for the IRK update in \eqref{eq:update}. Theory is developed 
in \Cref{sec:solve:prec} that guarantees the {condition number of the
preconditioned operator is bounded by a small, order-one constant} under basic
assumptions on stability from \Cref{sec:intro:stab}. 
The condition number of the preconditioned system is \emph{asymptotically optimal}
in the sense that it is bounded independent of the spatial mesh and time step,
and with only weak dependence on the number of stages or polynomial order.
For example, the preconditioned
operator for 10th-order Gauss IRK has condition number less than two. Moreover,
the conditioning results are \emph{independent of the underlying spatial
discretization}, making the proposed method robust and, to-some-extent,
black-box. To our knowledge, \cite{jiao2020optimal} is the only other work to
prove guaranteed preconditioning or convergence for solving IRK methods applied
to arbitrary linear PDEs.\footnote{Some papers have considered bounds on the spectral
radius of the preconditioned operator, e.g., \cite{chen16}, but for non-SPD spatial
operators, such bounds can be a poor indicator of convergence, { e.g., see}
\cite{Manteuffel:2019}.} In addition, in contrast to other works that have
considered the preconditioning of \eqref{eq:k0}, the proposed algorithm here (i)
is amenable to short-term Krylov recursion (conjugate gradient (CG)/MINRES) if
$\gamma M - \mathcal{L}$ is, and (ii) only operates on the solution, thus not
requiring the storage of each stage vector.

Numerical results are provided in \Cref{sec:numerics}, demonstrating the new
method for a variety of problems and corresponding preconditioners, including
very high-order finite-difference and DG spatial discretizations of
advection-diffusion equations, and matrix-free continuous Galerkin
discretizations of diffusion equations. The method is shown to be fast and scalable
up to 10th-order accuracy in time, effective on fully advective (hyperbolic)
problems, and, for multiple examples, can obtain 4th-order accuracy with Gauss
integration using roughly half as many preconditioning iterations {and wallclock
time} as needed by standard 4th-order SDIRK schemes.

{The methods are implemented with the MFEM \cite{Anderson2020} library
and available at \url{https://github.com/bensworth/IRKIntegration}.}

\section{Background}\label{sec:background}

\subsection{Motivation and previous work}\label{sec:intro:hist}

Diagonally implicit Runge-Kutta methods (DIRK), where $A_0$ is lower triangular,
are commonly used in practice \cite{kennedy16}. For such schemes, the solution of \eqref{eq:k0}
using a block substitution algorithm requires only $s$ linear solves of systems
of the form $ M - \delta ta_{ii}\mathcal{L}$. Unfortunately, DIRK schemes
suffer from order reduction, where the order of accuracy observed in practice on
stiff nonlinear PDEs or DAEs can be limited to $\approx \min\{ p, q+1\}$ or $q$,
respectively, for formal integration order $p$ and stage-order $q$
\cite{hairer96,kennedy16}.

The stage-order of a DIRK method is at most one
(EDIRK methods, with one explicit stage, have a maximum stage order of two) and,
thus, even a 6th-order DIRK method may only yield first- or second-order
accuracy \cite{butcher00}. In contrast, IRK methods may have arbitrarily high
stage order and, thus, formally high-order accuracy on stiff, nonlinear
problems, and even index-2 DAEs \cite{hairer96}. Although the focus of this paper is linear PDEs
without algebraic constraints, we want to highlight that the theory and
framework developed here is fundamental to a companion paper on nonlinear PDEs
and DAEs \cite{irk2}. Furthermore, for less stiff problems, IRK methods can yield accuracy
as high as order $2s$ for an $s$-stage method, compared with a maximum of $s$ or
$s+1$ for SDIRK methods with reasonable stability properties \cite[Section
IV.6]{hairer96},\cite{kennedy16}. Multistep methods can overcome some of the accuracy
constraints of SDIRK methods, but implicit multistep methods cannot be
A-stable and greater than order two, which is limiting when considering
advection-dominated or hyperbolic problems, where the field-of-values often
push up against the imaginary axis. Furthermore, for problems where symplectic
integration is desirable for conservation, neither linear multistep nor
explicit methods can be generally symplectic {(i.e., for non-separable problems)}
\cite{Hairer.2002}. Although DIRK methods
can be symplectic, they are limited to at most 4th order and, moreover, known
methods above second order are impractical due to negative diagonal entries
of $A_0$ (leading to a negative shift rather than positive shift of the spatial
discretization) \cite{kennedy16}. Thus, even moderate order symplectic
integration requires IRK methods.

Many papers have considered the solution of \eqref{eq:kron1}, with Butcher
\cite{butcher76} and Bickart's \cite{bickart77} being some of the earliest works,
which develop ways to transform \eqref{eq:kron1} to a simpler form.\footnote{
In Kronecker form \eqref{eq:kron1}, SIRK methods \cite{norsett1976runge} are also
relatively straightforward to solve using existing preconditioning techniques.
But, although SIRK methods offer some advantages over DIRK methods, they still lack
the favorable stability and accuracy properties of IRK methods \cite{burrage82,orel91}.}
There, and in many of the works that followed, the goal was to minimize the cost of LU
decompositions used to solve \eqref{eq:kron1}, typically in the context of ODEs.
For large-scale simulation of PDEs, particularly on modern computing architectures,
LU decompositions (or other direct factorizations) are typically not feasible. In this
vein, a number of people have considered preconditioning techniques for \eqref{eq:kron1}
or approximations to \eqref{eq:kron1} on the nonlinear iteration level or time
discretization level. Various block preconditioning/approximation techniques
have been studied, primarily for parabolic problems
\cite{houwen97b,Houwen97c,nissen11,mardel07,staff06,hoffmann97,jay00}, and
multigrid methods for IRK and parabolic problems were developed in \cite{vanlent05}.
New ADI-type preconditioners for IRK methods were developed for parabolic problems
in \cite{chen14} with spectral radius shown to be $<1$ under reasonable assumptions,
and the method extended to the viscous wave equation in first-order form in
\cite{chen16}. More recently, block ILU preconditioners were successfully applied
to a transformed version of \eqref{eq:k0} in \cite{pazner17} on more difficult
nonlinear compressible fluids problems.
A handful of works have also studied linear solvers for DG-in-time discretizations,
primarily for parabolic problems, including block preconditioning approaches
\cite{exh,8jp,27n}, and direct space-time multigrid methods
\cite{gander2016analysis}. In fact, some of the principles used in this paper
are similar to those used in \cite{exh} for space-time DG discretizations of
linear parabolic problems, and some of the theory derived therein is generalized
to non-parabolic/non-SPD operators in this paper.

Despite many papers considering the efficient solution of IRK/DG-in-time methods,
very little has been done in the development and analysis of preconditioning techniques
for non-parabolic problems/non-SPD spatial operators, particularly methods that are
amenable to combine with existing fast, parallel preconditioners ({unlike, e.g.,
the block ILU approach in \cite{Pazner2019a}}). {To our knowledge, no methods have
been developed with theoretical guarantees of effectiveness/robustness for a wide range
of problems. Here, we develop a preconditioning framework for linear PDEs that can
be used with arbitrary existing preconditioners/linear solvers, and is guaranteed
to provide effective preconditioning under only minor assumptions on the definiteness
of the spatial operator (see \Cref{ass:fov}). This provides a robust, almost-black-box
method that can be quickly added to existing codes with implicit integration to
support IRK integration for linear PDEs.}

{C++ Code for the IRK preconditioners developed here is built on the MFEM
library \cite{Anderson2020}, and available at
\url{https://github.com/bensworth/IRKIntegration}.}

\begin{remark}[Growing interest in IRK]
It is worth pointing out that while writing this paper, at least three preprints
have been posted online studying the use of IRK methods for numerical PDEs.
Two papers develop new block preconditioning techniques for parabolic PDEs
\cite{jiao2020optimal,rana2020new}, and one focuses on a high-level numerical
implementation of IRK methods with the Firedrake package \cite{farrell2020irksome}.
\end{remark}

\subsection{A preconditioning framework and stability}\label{sec:intro:stab}

Throughout the paper, we use the reformulation used in, for example,
\cite{pazner17}, where we can pull an $A_0\otimes I$ out of the
fully implicit system in \eqref{eq:k0}, yielding the equivalent problem

\begin{align}\label{eq:keq}
\left( A_0^{-1}\otimes M - \delta t I \otimes \mathcal{L}\right)
	(A_0\otimes I) \mathbf{k}
& = \mathbf{f}.
\end{align}

The off-diagonal block coupling in \eqref{eq:keq} now consists of mass matrices
rather than differential operators, which makes the analysis and solution more
tractable.
The algorithms developed here depend on the eigenvalues of $A_0$ and
$A_0^{-1}$, leading to our first assumption.

\begin{assumption} \label{ass:eig}
Assume that all eigenvalues of $A_0$ (and equivalently $A_0^{-1})$ have positive real part.
\end{assumption}

Recall that if an IRK method is A-stable, irreducible, and $A_0$ is invertible
(which includes DIRK, Gauss, Radau IIA, and Lobatto IIIC methods, among others),
then \Cref{ass:eig} holds \cite{hairer96}; that is, \Cref{ass:eig} is
straightforward to satisfy in practice.

Stability must be taken into consideration when applying ODE solvers within a
method-of-lines approach to numerical PDEs. The Dalhquist test problem extends
naturally to this setting, where we are interested in the stability of the
linear operator $\mathcal{L}$, for the ODE(s)
$\mathbf{u}'(t) = \mathcal{L}\mathbf{u}$, with solution $e^{t\mathcal{L}}\mathbf{u}$.
{A necessary condition for stability is that the eigenvalues of $\delta t\mathcal{L}$
lie within the region of stability for the Runge-Kutta scheme of choice} (e.g., see \cite{reddy92}).
Here we are interested in implicit schemes and, because most implicit Runge-Kutta
schemes used in practice are A- or L-stable, an effectively necessary condition for
stability is that the real part of eigenvalues of $\mathcal{L}$ be nonpositive.
For normal matrices, this requirement ends up being a necessary and sufficient
condition for stability.

For non-normal or non-diagonalizable operators, the analysis is more complicated.
One of the best known works on the subject is by Reddy and Trefethen \cite{reddy92},
where necessary and sufficient conditions for stability are derived as the
$\varepsilon$ pseudo-eigenvalues of $\mathcal{L}$ being within
$\mathcal{O}(\varepsilon) + \mathcal{O}(\delta t)$ of the stability region
as $\varepsilon,\delta t\to 0$. Here we relax this assumption to something
that is more tractable to work with by noting that the $\varepsilon$
pseudo-eigenvalues are contained within the field of values to
$\mathcal{O}(\varepsilon)$ \cite[Eq. (17.9)]{trefethen2005spectra},
where the field of values is defined as

\begin{align}\label{eq:fov}
W(\mathcal{L}) := \left\{ \langle \mathcal{L}\mathbf{x},\mathbf{x}\rangle \text{ : }
	\|\mathbf{x}\| = 1 \right\}.
\end{align}

This motivates the following assumption for the analysis done in this paper:

\begin{assumption} \label{ass:fov}
Let $\mathcal{L}$ be the linear spatial operator, and assume that $W(\mathcal{L}) \leq 0$
(that is, $W(\mathcal{L})$ is a subset of the left half plane (including imaginary axis)).
\end{assumption}

{Note that if $\mathcal{L}$ is normal, then \Cref{ass:fov} is equivalent
to the real parts of the eigenvalues of $\mathcal{L}$ being in the closed left-half plane
since $W(\mathcal{L})$ is the convex hull of the eigenvalues.}

It should be noted that the field of values has an additional connection
to stability. From \cite[Theorem 17.1]{trefethen2005spectra}, we have that
$\|e^{t\mathcal{L}}\|\leq 1$ for all $t\geq 0$ if and only if $W(\mathcal{L}) \leq 0$.
This is analogous to the ``strong stability'' discussed by Leveque
\cite[Chapter 9.5]{leveque2007finite}, as opposed to the weaker (but still
sufficient) condition $\|e^{t\mathcal{L}}\|\leq C$ for all $t\geq 0$ and
some constant $C$. In practice, \Cref{ass:fov} often holds when
simulating numerical PDEs, and in \Cref{sec:solve:prec} it is proven that
\Cref{ass:eig,ass:fov} guarantee the methods proposed here yield
{a small, bounded condition number} of the preconditioned operator.
{Specifically, conditioning depends on the eigenvalues of $A_0$,
but the condition number of the preconditioned operator is $<2.5$ for all
Gauss, Radau, and Lobatto schemes tested here, up to 5 stages, and sees
only slow growth in the total number of RK stages.} It should
also be noted that $\mathcal{L}$ need not be nonsingular.

\section{Preconditioning the stage matrix}\label{sec:solve}

For ease of notation, let us scale both sides of \eqref{eq:keq} by a block
diagonal operator, with diagonal blocks $M^{-1}$, and let

\begin{equation}\label{eq:Minv}
\widehat{\mathcal{L}} := \delta t M^{-1}\mathcal{L},
\end{equation}

for $i=1,...,s$. Now let $\alpha_{ij}$ denote the $ij$-element
of $A_0^{-1}$.\footnote{Note, there are
methods with one explicit stage followed by several fully implicit stages \cite{butcher00}.
In such cases, $A_0$ is not invertible, but the explicit stage can
be eliminated from the system (by doing an explicit time step). The remaining operator
can then be reformulated as in \eqref{eq:keq}.}
Then, solving \eqref{eq:keq} can be effectively reduced to inverting the operator

\begin{align}
\mathcal{M}_s \coloneqq A_0^{-1}\otimes I -  I \otimes \widehat{\mathcal{L}}
& = \begin{bmatrix} \alpha_{11}I - \widehat{\mathcal{L}} & \alpha_{12}I & \cdots & \alpha_{1s}I \\
	\alpha_{21}I & \alpha_{22}I - \widehat{\mathcal{L}} & \cdots & \alpha_{2s}I \\
	\vdots & \vdots & \ddots & \vdots \\ \alpha_{s1}I & \cdots & \cdots & \alpha_{ss}I - \widehat{\mathcal{L}} \end{bmatrix}.
	\label{eq:k1}
\end{align}

We proceed by deriving a closed form inverse of \eqref{eq:k1}, demonstrating
how the Runge-Kutta update in \eqref{eq:update} can then be performed directly
(without forming and saving each stage vector), and developing a preconditioning
strategy to apply this update using existing preconditioners.
{Note, in practice we do \emph{not} directly form $\widehat{\mathcal{L}}$,
as $M^{-1}$ is often a dense matrix. Rather, it is a theoretical tool to
simplify notation; in practice mass-matrix inverses are applied via either
preconditioned CG or direct inverse when feasible (e.g., for DG in space).
See \Cref{alg:irk} for a practical description of the final algorithm with
mass matrices.}

\subsection{An inverse and update for commuting operators}\label{sec:solve:inv}

This section introduces a result similar to Bickart's \cite{bickart77},
but using a different framework. We consider $\mathcal{M}_s$
as a matrix over the commutative ring of linear combinations of $\{I, \widehat{\mathcal{L}}\}$,
and the determinant and adjugate referred to in \Cref{lem:inv} are defined
over matrix-valued elements rather than scalars. For the interested reader,
see \cite{brown1993matrices} for details on matrices and the corresponding
linear algebra when matrix elements are defined over a space of commuting
matrices.

{
\begin{lemma}\label{lem:inv}
Let $\alpha_{ij}$ denote the $(i,j)$th entry of $A_0^{-1}$ and define $\mathcal{M}_s$

as in \eqref{eq:k1}. Let $\det(\mathcal{M}_s)$ be the determinant of $\mathcal{M}_s$,
$\adj(\mathcal{M}_s)$ be the adjugate of $\mathcal{M}_s$, and $P_s(x)$ be the
characteristic polynomial of $A_0^{-1}$. Then, $\mathcal{M}_s$
is invertible if and only if $\det(\mathcal{M}_s)$ is not singular, and
\begin{align*}
\mathcal{M}_s^{-1} = \big(I_s\otimes P_s(\widehat{\mathcal{L}})^{-1}\big)\textnormal{adj}(\mathcal{M}_s),
\end{align*}
\end{lemma}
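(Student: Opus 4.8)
The plan is to treat $\mathcal{M}_s$ not as an unstructured $Ns\times Ns$ real matrix but as an $s\times s$ matrix whose entries live in the commutative ring $R := \mathbb{R}[\widehat{\mathcal{L}}]$ of real polynomials in the single matrix $\widehat{\mathcal{L}}$: each block of $\mathcal{M}_s$ is $\alpha_{ij}I$ or $\alpha_{ii}I-\widehat{\mathcal{L}}$, all of which lie in $R$. Over any commutative ring the classical adjugate identity holds, so
\[
\mathcal{M}_s\,\adj(\mathcal{M}_s)=\adj(\mathcal{M}_s)\,\mathcal{M}_s=\det(\mathcal{M}_s)\,I_s ,
\]
where $\det(\mathcal{M}_s)\in R$ is itself a polynomial in $\widehat{\mathcal{L}}$ and, reinterpreting $R$-entries as $N\times N$ blocks, the right-hand side is $I_s\otimes\det(\mathcal{M}_s)$. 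The entries of $\adj(\mathcal{M}_s)$ are $(s-1)\times(s-1)$ minors of $\mathcal{M}_s$, hence also lie in $R$; this is what lets us invoke the ring-theoretic identity verbatim.

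Next I would identify $\det(\mathcal{M}_s)$ with the characteristic polynomial of $A_0^{-1}$ evaluated at $\widehat{\mathcal{L}}$. The determinant over $R$ is a fixed universal polynomial in the matrix entries, and $x\mapsto\widehat{\mathcal{L}}$ induces a ring homomorphism $\mathbb{R}[x]\to R$; applying it to the scalar polynomial identity $\det(A_0^{-1}-xI_s)=(-1)^s\det(xI_s-A_0^{-1})=(-1)^sP_s(x)$ gives $\det(\mathcal{M}_s)=(-1)^sP_s(\widehat{\mathcal{L}})$, with the sign absorbed into the convention used for $P_s$ in the statement. A key ancillary point is that $P_s(\widehat{\mathcal{L}})$ commutes with $\widehat{\mathcal{L}}$, hence so does $P_s(\widehat{\mathcal{L}})^{-1}$ whenever it exists, so $I_s\otimes P_s(\widehat{\mathcal{L}})^{-1}$ commutes with every block of both $\mathcal{M}_s$ and $\adj(\mathcal{M}_s)$.

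For the ``if'' direction, assuming $\det(\mathcal{M}_s)$ (equivalently $P_s(\widehat{\mathcal{L}})$) is nonsingular, multiply the adjugate identity on the left and then on the right by $I_s\otimes P_s(\widehat{\mathcal{L}})^{-1}$; using the commutation just noted to pass this block through, one obtains $\big(I_s\otimes P_s(\widehat{\mathcal{L}})^{-1}\big)\adj(\mathcal{M}_s)\,\mathcal{M}_s=\mathcal{M}_s\,\big(I_s\otimes P_s(\widehat{\mathcal{L}})^{-1}\big)\adj(\mathcal{M}_s)=I_{Ns}$, so this expression is a genuine two-sided inverse, which is exactly the claimed formula. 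For the ``only if'' direction I would use the Kronecker spectral mapping theorem (via simultaneous Schur triangularization over $\mathbb{C}$): $\sigma(\mathcal{M}_s)=\sigma(A_0^{-1}\otimes I-I\otimes\widehat{\mathcal{L}})=\{\nu-\mu:\nu\in\sigma(A_0^{-1}),\ \mu\in\sigma(\widehat{\mathcal{L}})\}$, so $\mathcal{M}_s$ is singular iff $A_0^{-1}$ and $\widehat{\mathcal{L}}$ share an eigenvalue, iff $P_s$ has a root in $\sigma(\widehat{\mathcal{L}})$, iff $P_s(\widehat{\mathcal{L}})$ is singular; the contrapositive gives the claim.

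The main obstacle here is bookkeeping rather than depth: one must be careful that $\det(\mathcal{M}_s)$ and the entries of $\adj(\mathcal{M}_s)$ genuinely lie in the commutative ring $R$ so the ring-theoretic adjugate identity applies, that $P_s(\widehat{\mathcal{L}})^{-1}$ — generically \emph{not} a polynomial in $\widehat{\mathcal{L}}$ — still commutes with the entries of $\mathcal{M}_s$ and $\adj(\mathcal{M}_s)$ (this is what upgrades the one-sided formula to a two-sided inverse), and that the sign $(-1)^s$ relating $\det(\mathcal{M}_s)$ to the characteristic polynomial is handled consistently.
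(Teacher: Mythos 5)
Your proposal is correct and follows essentially the same route as the paper: view $\mathcal{M}_s$ as an $s\times s$ matrix over the commutative ring generated by $I$ and $\widehat{\mathcal{L}}$, invoke the adjugate identity $\mathcal{M}_s\adj(\mathcal{M}_s)=I_s\otimes\det(\mathcal{M}_s)$, and identify $\det(\mathcal{M}_s)$ with the characteristic polynomial of $A_0^{-1}$ evaluated at $\widehat{\mathcal{L}}$. The only difference is that you fill in details the paper delegates to the cited ring-theoretic results (the sign/convention for $P_s$ and the ``only if'' direction, which you handle cleanly via the Kronecker spectral mapping), so no substantive divergence.
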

\begin{proof}
Notice in \eqref{eq:k1} that $\mathcal{M}_s$ is a matrix over the commutative ring
of linear combinations of $I$ and $\widehat{\mathcal{L}}$. A classical result in
matrix analysis \cite{brown1993matrices} tells us that

\begin{align*}
\textnormal{adj}(\mathcal{M}_s)\mathcal{M}_s = \mathcal{M}_s\textnormal{adj}(\mathcal{M}_s)
	= (I_s\otimes \det(\mathcal{M}_s))I.
\end{align*}

Moreover, $\mathcal{M}_s$ is invertible if and only if the determinant of $\mathcal{M}_s$
is invertible, in which case $\mathcal{M}_s^{-1} =
(I_s\otimes \det(\mathcal{M}_s)^{-1})\textnormal{adj}(\mathcal{M}_s)$
\cite[Theorem 2.19 \& Corollary 2.21]{brown1993matrices}.
Moreover, notice that $\mathcal{M}_s$ takes the form $A_0^{-1} - \widehat{\mathcal{L}}I$
over the commutative ring defined above. Analogous to a matrix defined over the real or
complex numbers, the determinant of $A_0^{-1} - \widehat{\mathcal{L}}I$ is the
characteristic polynomial of $A_0^{-1}$ evaluated at $\widehat{\mathcal{L}}$,
which completes the proof.
\end{proof}
}

Returning to \eqref{eq:keq}, {let $\hat{\mathbf{f}} = (I_s\otimes M^{-1})\mathbf{f}$,
which applies the mass-matrix inverse in \eqref{eq:Minv} to the right-hand side.} Then,
we can express the solution for the set of all
stage vectors ${\mathbf{k}} = [\mathbf{k}_1; ...; \mathbf{k}_s]$ as

\begin{align*}
\mathbf{k} &:= \left(I_s\otimes \det(\mathcal{M}_s)^{-1}\right)
	(A_0^{-1}\otimes I)\textnormal{adj}(\mathcal{M}_s)(I_s\otimes M^{-1}){\mathbf{f}},
\end{align*}

where $\mathbf{f} = [\mathbf{f}_1; ...; \mathbf{f}_s]$ (note that
$A_0\otimes I$ commutes with $(I_s\otimes \det(\mathcal{M}_s)^{-1})$).
The Runge-Kutta update is then given by

\begin{align}\nonumber
\mathbf{u}_{n+1} & = \mathbf{u}_n + \delta t\sum_{i=1}^s b_i{\mathbf{k}}_i \\
& = \mathbf{u}_n + \delta t\det(\mathcal{M}_s)^{-1}
	(\mathbf{b}_0^TA_0^{-1}\otimes I)\textnormal{adj}(\mathcal{M}_s)(I_s\otimes M^{-1}){\mathbf{f}}.\label{eq:update2}
\end{align}

\begin{remark}[Implementation \& complexity]
The adjugate consists of linear combinations of $I$ and $\widehat{\mathcal{L}}$, and an
analytical form can be derived for an arbitrary $s\times s$ matrix {for small $s$.}

Applying its action requires a set of vector summations
and matrix-vector multiplications. In particular, the diagonal elements of
$\textnormal{adj}(\mathcal{M}_s)$ are monic polynomials in $\widehat{\mathcal{L}}$ of
degree $s-1$
and off-diagonal terms are polynomials in $\widehat{\mathcal{L}}$ of degree $s-2$.

Returning to \eqref{eq:update2}, we consider two cases. First, if a given Runge-Kutta
scheme is stiffly accurate (for example, Radau IIA methods),
then $\mathbf{b}_0^TA_0^{-1} = [0,...,0,1]$. This yields
the nice simplification that computing the update in \eqref{eq:update2} only requires
applying the last row of $\textnormal{adj}(\mathcal{M}_s)$ to $\hat{\mathbf{f}}$ (in a
dot-product sense) and applying $\det(\mathcal{M}_s)^{-1}$ to the result. From
the discussion above regarding the adjugate structure, applying the last row of
$\textnormal{adj}(\mathcal{M}_s)$ requires $(s-2)(s-1) + (s-1) = (s-1)^2$ matrix-vector
multiplications. Because this only happens once, followed by the linear solve(s),
these multiplications are typically of relatively marginal cost.

In the more general case of non-stiffly accurate methods (for example, Gauss
methods), one can obtain an analytical form for $(\mathbf{b}_0^TA_0^{-1}\otimes
I)\textnormal{adj}(\mathcal{M}_s)$. Each element in this block $1\times s$
matrix consists of polynomials in $\widehat{\mathcal{L}}$ of degree $s-1$
(although typically not monic). Compared with stiffly accurate schemes, this now
requires $(s-1)s$ matrix-vector multiplications, which is $s-1$ more than for
stiffly accurate schemes, but still typically of marginal overall computational
cost. {For more information, see \Cref{alg:irk} and the discussion that follows it.}
\end{remark}

\subsection{Preconditioning by conjugate pairs}\label{sec:solve:prec}

Following the discussion and algorithm developed in \Cref{sec:solve:inv}, the key
outstanding point in computing $\mathbf{u}_{n+1}$ using the update \eqref{eq:update2}
is inverting $P_s(\widehat{\mathcal{L}})$, where $P_s(x)$
is the characteristic polynomial of $A_0^{-1}$ (see \Cref{lem:inv}).

In contrast to much of the early work on solving IRK systems, where LU factorizations
were the dominant cost and system sizes relatively small, explicitly forming and inverting
$P_s(\widehat{\mathcal{L}})$ for numerical PDEs is typically not a viable option in high-performance
simulation on modern computing architectures. Instead, by computing the eigenvalues
$\{\lambda_i\}$ of $A_0^{-1}$, we can express $P_s(\widehat{\mathcal{L}})$ in a factored form,

\begin{align}\label{eq:fac}
P_s(\widehat{\mathcal{L}}) = \prod_{i=1}^s (\lambda_i I - \widehat{\mathcal{L}}),
\end{align}

and its inverse can then be computed by successive applications of
$(\lambda_iI - \widehat{\mathcal{L}})^{-1}$,
for $i=1,...,s$. Unfortunately, eigenvalues of $A_0$ and $A_0^{-1}$ are often
complex, and for real-valued matrices this makes the inverse of individual factors
$(\lambda_iI - \widehat{\mathcal{L}})^{-1}$ more difficult and often impractical
with standard preconditioners and existing software. Moving forward, let
$\lambda := \eta + \mathrm{i}\beta$ denote an eigenvalue of $A_0^{-1}$,
for $\eta, \beta \in \mathbb{R}$, with $\beta \geq 0$ and $\eta > 0$ under \Cref{ass:eig}.

Here, we combine conjugate eigenvalues into quadratic polynomials
that we must precondition, which take the form

\begin{align}\label{eq:imag1}
\begin{split}
\mathcal{Q}_\eta :&= ((\eta + \mathrm{i}\beta)I -
	\widehat{\mathcal{L}})((\eta - \mathrm{i}\beta)I - \widehat{\mathcal{L}}) \\
& = (\eta^2+\beta^2)I - 2\eta \widehat{\mathcal{L}} + \widehat{\mathcal{L}}^2
= (\eta I - \widehat{\mathcal{L}})^2 + \beta^2I.
\end{split}
\end{align}

{We then express \eqref{eq:fac} as a product of $\mathcal{Q}_{\eta_j}$ \eqref{eq:imag1},
for $j=1,...,s/2$, $P_s(\widehat{\mathcal{L}}) = \prod_{j=1}^{s/2}
\mathcal{Q}_{\eta_j}$, and solve each successive quadratic operator $\mathcal{Q}_{\eta_j}$
(note, for odd $s$ there will also be a term $(\lambda_i I - \widehat{\mathcal{L}})$
corresponding to the real eigenvalue of the Butcher tableau).}
In practice, we typically do not want to directly form or precondition a quadratic
operator like \eqref{eq:imag1}, due to (i) the overhead cost of large parallel matrix
multiplication, (ii) the fact that many fast parallel methods such as multigrid are
not well-suited for solving a polynomial in $\widehat{\mathcal{L}}$, and (iii) it is
increasingly common that even $\mathcal{L}$ is only available as a
partially-assembled/matrix-free operator. The point of \eqref{eq:imag1}
is that by considering conjugate pairs of eigenvalues, the resulting operator is real-valued.
{To invert \eqref{eq:fac}, we fully resolve the inverse for one conjugate
pair of eigenvalues \eqref{eq:imag1} before moving onto the next; this avoids
potentially compounding condition numbers if we tried to invert the full
polynomial \eqref{eq:fac} all-at-once. Moreover, then we only need to store a
solver for one pair of eigenvalues at a time.}

\subsection{Condition-number optimal conjugate preconditioning}\label{sec:solve:gamma}

{
This section develops a preconditioner for $\mathcal{Q}_\eta$ such that the
condition number of the preconditioned operator is bounded by a small, order-one
constant, \emph{independent of } $\widehat{\mathcal{L}}$. The preconditioner is
optimal over the space of general preconditioners $(\delta I - \cL)^{-1}(\gamma I - \cL)^{-1}$,
for $\delta, \gamma \in (0,\infty)$, in terms of minimizing the maximum
condition number over {all} $\cL$. Furthermore, the condition number of the
preconditioned system is asymptotically
optimal in the sense that it is bounded independent of $\delta t$
and spatial mesh spacing, $h$, and has only weak dependence on the order of time
integration. The analysis derived herein is based on the assumption that a small
bounded condition number corresponds to better preconditioners for nonsymmetric matrices.
}

{Given that \eqref{eq:imag1} is a quadratic polynomial in
$\widehat{\mathcal{L}}$, consider defining a preconditioner as
a \emph{factored} quadratic polynomial in $\widehat{\mathcal{L}}$,
$[(\delta I - \widehat{\mathcal{L}})(\gamma I - \widehat{\mathcal{L}})]^{-1}$,
for $\gamma,\delta > 0$, where we can invert the two factors separately.
The preconditioned operator then takes the form}

\begin{align}\label{eq:P_gen}
\mathcal{P}_{\delta,\gamma} & \coloneqq
	(\delta I - \widehat{\mathcal{L}})^{-1}(\gamma I - \widehat{\mathcal{L}})^{-1}
		\Big[(\eta I - \widehat{\mathcal{L}})^2 + \beta^2 I\Big].
\end{align}

Such an approach was proven effective for symmetric definite spatial matrices in
\cite{exh}, where it is assumed $\gamma = \delta$, and the constant
$\gamma = \gamma_* = \sqrt{\eta^2+\beta^2}$ {is derived to be optimal
in a certain sense.} \Cref{th:cond} in \Cref{appendix} derives tight bounds
on {the maximum condition number of ${\cal P}_{\delta,\gamma}$ \eqref{eq:P_gen}
over all $\widehat{\mathcal{L}}$ that satisfy \Cref{ass:fov}, and further
derives $\delta,\gamma\in(0,\infty)$ that minimize this upper bound.}
\Cref{cor:cond} {below shows that the optimal factored quadratic preconditioner
\eqref{eq:P_gamma} over all $\delta,\gamma\in(0,\infty)$, in terms of minimizing
the maximum $\ell^2$-condition number over all $\widehat{\mathcal{L}}$ that
satisfy \Cref{ass:fov}, is obtained by setting $\delta = \gamma = \gamma_* \coloneqq
\sqrt{\eta^2+\beta^2}$. The preconditioned operator then takes the form}\footnote{
{Note that in \eqref{eq:P_gen}, the preconditioner ($(\delta I - \widehat{\mathcal{L}})^{-1}
(\gamma I - \widehat{\mathcal{L}})^{-1}$) and the operator ($(\eta I -
\widehat{\mathcal{L}})^2 + \beta^2 I$) commute, and so left and right
preconditioning are equivalent.}}

\begin{align}\label{eq:P_gamma}
\mathcal{P}_{\gamma_*} & \coloneqq
	(\gamma_* I - \widehat{\mathcal{L}})^{-2}
		\Big[(\eta I - \widehat{\mathcal{L}})^2 + \beta^2 I\Big].
\end{align}

\begin{corollary}[Condition-number bounds, independent of $\widehat{\mathcal{L}}$]
\label{cor:cond}
{The maximum $\ell^2$ condition number of ${\cal P}_{\delta,\gamma}$ \eqref{eq:P_gen}
over \emph{all} $\cL$ that satisfy \Cref{ass:fov}} is minimized over
$\delta, \gamma \in (0, \infty)$ when $\delta = \gamma = \gamma_*$, with
\begin{align} \label{eq:gamma*}
\gamma_* = \sqrt{\eta^2 + \beta^2}.
\end{align}
That is to say, the preconditioner $(\gamma_* I - \cL)^{-2}$
is optimal over the space of general
preconditioners $(\delta I - \cL)^{-1}(\gamma I - \cL)^{-1}$, for $\delta,
\gamma \in (0,\infty)$, {in terms of minimizing the maximum
condition number over {all} $\cL$}. Furthermore, the condition number of the
preconditioned operator ${\cal P}_{\gamma_*}$ \eqref{eq:P_gamma} is bounded
for all $\widehat{\mathcal{L}}$ via
\begin{align} \label{eq:kappa_gamma*}
\kappa({\cal P}_{\gamma_*}) \leq \sqrt{1 + \frac{\beta^2}{\eta^2}},
\end{align}
and $\exists$ some $\cL$ such that \eqref{eq:kappa_gamma*} is satisfied
with equality.
\end{corollary}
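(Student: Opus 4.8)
The plan is to read \Cref{cor:cond} off \Cref{th:cond}, which gives the sharp worst-case value of $\kappa(\mathcal{P}_{\delta,\gamma})$ over all $\widehat{\mathcal{L}}$ obeying \Cref{ass:fov} for \emph{fixed} shifts, and then to minimize over $(\delta,\gamma)\in(0,\infty)^2$ and construct the worst case by hand (the symmetric sub-case $\delta=\gamma$ with SPD $\widehat{\mathcal{L}}$ is \cite{exh}, the template). The organizing fact is that every operator in \eqref{eq:P_gen} is a rational function of $\widehat{\mathcal{L}}$, so $\mathcal{P}_{\delta,\gamma}=r_{\delta,\gamma}(\widehat{\mathcal{L}})$ with $r_{\delta,\gamma}(z)=\tfrac{(z-\lambda)(z-\bar\lambda)}{(z-\delta)(z-\gamma)}$, $\lambda=\eta+\mathrm{i}\beta$, and under \Cref{ass:fov} the poles $\delta,\gamma$ of $r_{\delta,\gamma}$ and $\lambda,\bar\lambda$ of $1/r_{\delta,\gamma}$ all have positive real part, so both functions are analytic and bounded on the closed left half-plane, which contains $W(\widehat{\mathcal{L}})$.

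For the lower bound (hence the optimality of $\delta=\gamma=\gamma_*$) it suffices to consider diagonal $\widehat{\mathcal{L}}$ with spectrum in the closed left half-plane: then $\kappa(\mathcal{P}_{\delta,\gamma})=\max_{\sigma(\widehat{\mathcal{L}})}|r_{\delta,\gamma}|/\min_{\sigma(\widehat{\mathcal{L}})}|r_{\delta,\gamma}|$, and by the maximum-modulus principle (both $r_{\delta,\gamma}$ and $1/r_{\delta,\gamma}$ are bounded on the closed left half-plane and equal $1$ at $\infty$) the supremum of this over such matrices equals $\big(\max_{u\ge0}f(u)/\min_{u\ge0}f(u)\big)^{1/2}$, where $f(u):=|r_{\delta,\gamma}(\mathrm{i}\sqrt u)|^2$ is an explicit ratio of quadratics in $u=y^2$. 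A short calculus computation then gives $\min_{\delta,\gamma>0}\big(\max f/\min f\big)=1+\beta^2/\eta^2$, uniquely at $\delta=\gamma=\gamma_*=\sqrt{\eta^2+\beta^2}$: one uses $f(0)=(\gamma_*^2/(\delta\gamma))^2$ and $f(\infty)=1$ to see the optimizer has $\delta\gamma=\gamma_*^2$, and then $\delta^2+\gamma^2\ge2\gamma_*^2$ (equality iff $\delta=\gamma$) pins it down; at the optimum $f(u)=1-\tfrac{4\beta^2u}{(u+\gamma_*^2)^2}$, with $\max f=1$ and $\min f=\eta^2/(\eta^2+\beta^2)$. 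Hence $\sup_{\widehat{\mathcal{L}}}\kappa(\mathcal{P}_{\delta,\gamma})\ge\sqrt{1+\beta^2/\eta^2}$ for every $(\delta,\gamma)$.

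For the upper bound $\kappa(\mathcal{P}_{\gamma_*})\le\sqrt{1+\beta^2/\eta^2}$ valid for \emph{all} $\widehat{\mathcal{L}}$ obeying \Cref{ass:fov} — which both closes the optimization and is \eqref{eq:kappa_gamma*} — I would avoid a generic field-of-values transfer (Crouzeix--Palencia inserts a spurious factor $1+\sqrt2$) and the naive factored estimate (which only yields $\|\mathcal{P}_{\gamma_*}^{-1}\|\le\gamma_*/(\gamma_*-\beta)$, too weak). Instead, substituting $v=(\gamma_* I-\widehat{\mathcal{L}})^{-2}w$ and using $(\gamma_* I-\widehat{\mathcal{L}})^2-\mathcal{Q}_\eta=-2(\gamma_*-\eta)\widehat{\mathcal{L}}$ (which holds since $\gamma_*^2=\eta^2+\beta^2$), the claims $\|\mathcal{P}_{\gamma_*}\|\le1$ and $\|\mathcal{P}_{\gamma_*}^{-1}\|\le\gamma_*/\eta$ become the operator inequalities $\|\mathcal{Q}_\eta v\|\le\|(\gamma_* I-\widehat{\mathcal{L}})^2v\|$ and $\eta\|(\gamma_* I-\widehat{\mathcal{L}})^2v\|\le\gamma_*\|\mathcal{Q}_\eta v\|$. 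Expanding both squared norms in $\|v\|$, $\|\widehat{\mathcal{L}}v\|$, $\|\widehat{\mathcal{L}}^2v\|$ and the inner products $\langle\widehat{\mathcal{L}}v,v\rangle$, $\langle\widehat{\mathcal{L}}(\widehat{\mathcal{L}}v),\widehat{\mathcal{L}}v\rangle$, $\langle\widehat{\mathcal{L}}^2v,v\rangle$, the key point is that \Cref{ass:fov} applies to $\widehat{\mathcal{L}}v$ as well as to $v$, so the first two inner products have nonpositive real part; combining this with Cauchy--Schwarz on the lone sign-indefinite term $\operatorname{Re}\langle\widehat{\mathcal{L}}^2v,v\rangle$, a completion of a square, and $\eta\le\gamma_*$ yields both inequalities. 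Equality in \eqref{eq:kappa_gamma*} is exhibited by $\widehat{\mathcal{L}}=\operatorname{diag}(0,\mathrm{i}\gamma_*)$, whose field of values is the segment $[0,\mathrm{i}\gamma_*]$ and for which a direct evaluation of \eqref{eq:P_gamma} gives $\mathcal{P}_{\gamma_*}=\operatorname{diag}(1,\eta/\gamma_*)$, of $\ell^2$ condition number $\gamma_*/\eta=\sqrt{1+\beta^2/\eta^2}$.

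The step I expect to be the main obstacle is this last operator estimate, specifically $\|\mathcal{P}_{\gamma_*}^{-1}\|\le\gamma_*/\eta$ for general non-normal $\widehat{\mathcal{L}}$: the target constant is strictly below what factor-by-factor bounding delivers, so one is forced to expand $\|\mathcal{Q}_\eta v\|^2$ and $\|(\gamma_* I-\widehat{\mathcal{L}})^2v\|^2$ in full and carefully balance the several nonpositive inner products from \Cref{ass:fov} (used at $v$ and at $\widehat{\mathcal{L}}v$) against the single indefinite cross term, with $\eta\le\gamma_*$ being precisely the margin that closes the estimate. The two-parameter minimization over $(\delta,\gamma)$ and the worst-case construction are, by comparison, routine.
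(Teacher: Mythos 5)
Your proposal is correct, and despite the different packaging it follows essentially the same route as the paper. Your upper bound is the paper's argument in disguise: after the substitution $\bm{v}=(\gamma_* I-\cL)^2\bm{w}$, the completed square $\|\gamma_*^2\bm{w}+\cL^2\bm{w}\|^2$ that absorbs the sign-indefinite $\langle \cL^2\bm{w},\bm{w}\rangle$ term, the two nonpositive field-of-values terms at $\bm{w}$ and at $\cL\bm{w}$, and the margin $\eta\le\gamma_*$ are exactly the quantities $f_0,\dots,f_3$ and $c_0=1$ in \eqref{eq:f_c_gen_def}; recast as your two operator inequalities $\|\mathcal{Q}_\eta \bm{w}\|\le\|(\gamma_* I-\cL)^2\bm{w}\|$ and $\eta\|(\gamma_* I-\cL)^2\bm{w}\|\le\gamma_*\|\mathcal{Q}_\eta \bm{w}\|$ the bookkeeping even comes out slightly cleaner (no Cauchy--Schwarz is needed: in the second inequality the $\|\cL\bm{w}\|^2$ terms cancel identically and only the completed square and the two field-of-values terms remain). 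Likewise your symbol $f(u)=|r_{\delta,\gamma}(\mathrm{i}\sqrt{u})|^2$ is precisely the paper's $\mathcal{H}_{\delta,\gamma}(\xi)$ in \eqref{eq:H_gen_def} with $u=\xi^2$; the only genuine organizational difference is that you minimize the worst case over $(\delta,\gamma)$ in one shot over normal operators with imaginary spectrum, whereas the paper first proves, for each fixed $\delta$, optimality of $\gamma_*(\delta)=(\eta^2+\beta^2)/\delta$ (\Cref{th:cond}) and then minimizes the resulting tight bound over $\delta$ by one-variable calculus.

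Two details need tightening. First, your hint that $f(0)=\big(\gamma_*^2/(\delta\gamma)\big)^2$ and $f(\infty)=1$ already force the optimizer to satisfy $\delta\gamma=\gamma_*^2$ does not work as stated: if $\delta\gamma\neq\gamma_*^2$ these two values only give a ratio exceeding $1$, not exceeding $1+\beta^2/\eta^2$. You also need an interior evaluation, e.g.\ $u=\eta^2+\beta^2$ (the paper's $\xi=\pm\sqrt{\delta\gamma_*}$ in \eqref{eq:H1_gen}); with $p=\delta\gamma$ and $s=\delta+\gamma$ one checks
\begin{align*}
\frac{\max\{f(0),f(\infty)\}}{f(\eta^2+\beta^2)}
= \frac{(p-\eta^2-\beta^2)^2+s^2(\eta^2+\beta^2)}{4\eta^2}\,
\max\Big\{\tfrac{\eta^2+\beta^2}{p^2},\tfrac{1}{\eta^2+\beta^2}\Big\}
\;\ge\; 1+\frac{\beta^2}{\eta^2},
\end{align*}
using $s^2\ge 4p$, with equality only at $\delta=\gamma=\gamma_*$; this closes the two-parameter minimization. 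Second, since the paper takes $\cL$ real-valued, the equality example $\operatorname{diag}(0,\mathrm{i}\gamma_*)$ should be replaced by a real normal realization of the spectrum $\{0,\pm\mathrm{i}\gamma_*\}$ (e.g.\ a block-diagonal matrix with a $1\times 1$ zero block and a $2\times 2$ skew-symmetric block of weight $\gamma_*$), which yields the same condition number $\gamma_*/\eta$ and matches the paper's construction. With these repairs the proposal is complete.
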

\begin{proof}
See \Cref{appendix}.
\end{proof}

{
\begin{remark}[Three-term recursion]\label{rem:3}
Note that for a given conjugate pair
of eigenvalue, suppose $(\eta I - \widehat{\mathcal{L}})$ is SPD and
$(\gamma_* I - \widehat{\mathcal{L}})^{-1}$ some SPD preconditioner (with
modified constant, which should not affect definiteness). Then, the associated
quadratic operator $\mathcal{Q}_\eta$ \eqref{eq:imag1} and preconditioner
$(\gamma_* I - \widehat{\mathcal{L}})^{-2}$ are also both SPD. It follows that if
CG/MINRES can be applied to backward Euler or SDIRK schemes, it can also be
applied to the quadratic operators arising here.
\end{remark}
}

\begin{remark}[Mass matrices]
Recall in the finite element context where mass matrices are involved, we defined
$\widehat{\mathcal{L}} := \delta t M^{-1}\mathcal{L}$ {as a theoretical tool.
In practice, we do not form $\widehat{\mathcal{L}}$ directly. Factoring the $M^{-1}$
off of the left, the quadratic polynomial \eqref{eq:imag1} for a given conjugate pair
of eigenvalues can be expressed as}

\begin{align}\label{eq:scaleM}
\mathcal{Q}_\eta = M^{-1}(\eta M - \delta t{\mathcal{L}})M^{-1}(\eta M - \delta t{\mathcal{L}}) + \beta^2I
\end{align}

{To invert $\mathcal{Q}_\eta$ iteratively,
it is best to first scale both sides of the linear system by $M$,
so we iterate on $M\mathcal{Q}_\eta$ (see also \Cref{alg:irk}). Each Krylov or fixed-point iteration
requires applying the operator to compute a residual, and applying $M\mathcal{Q}_\eta$
only requires computing $M^{-1}$ once, while applying $\mathcal{Q}_\eta$ requires
computing $M^{-1}$ twice, thus halving the number of times $M^{-1}$ must be applied
each iteration. Moreover, $\mathcal{Q}_\eta$ is not SPD, but if $M$ and $\mathcal{L}$
are Hermitian, $M\mathcal{Q}_\eta$ \emph{is} SPD, as is the preconditioner
$(\gamma_* I - \delta t{\mathcal{L}})^{-1}M(\gamma_* I - \delta t{\mathcal{L}})^{-1}$,
thus allowing the use of CG or MINRES acceleration analogous to \Cref{rem:3}.}
\end{remark}

\Cref{tab:cond} provides condition number bounds from \Cref{cor:cond} and
\eqref{eq:kappa_gamma*} for Gauss, Radau IIA, and Lobatto IIIC Runge-Kutta methods.
{Note that $\gamma_*$ is different for each conjugate eigenvalue pair and each
IRK method. It also should be pointed out that the formally optimal $\gamma$ in
terms of minimizing condition number is not the same for all $\widehat{\mathcal{L}}$;
rather, here we develop a constant $\gamma_*$ that is \emph{robust and effective} for
all $\widehat{\mathcal{L}}$, and does not require additional analysis (analytical
or numerical) as would be necessary to tune $\gamma$ to a specific operator.}

{

\renewcommand{\arraystretch}{1.15}
\begin{table}[!ht]
  \centering
  \begin{tabular}{| c | c | cc | cc | ccc |}  
  \hline
\multirow{2}{*}{Stages} & 2 & \multicolumn{2}{c}{3} & \multicolumn{2}{|c}{4} & \multicolumn{3}{|c|}{5} \\

& {$\lambda_{1,2}^\pm$} & {$\lambda_1$} & {$\lambda_{2,3}^\pm$} & {$\lambda_{1,2}^\pm$} &
	{$\lambda_{3,4}^\pm$} & {$\lambda_1$} & {$\lambda_{2,3}^\pm$} & {$\lambda_{4,5}^\pm$} \\
\hline
Gauss & 1.15 & 1.00 & 1.38 & 1.61 & 1.04 & 1.00 & 1.83 & 1.13 \\
Radau IIA & 1.22 & 1.00 & 1.51 & 1.79 & 1.05 & 1.00 & 2.05 & 1.15 \\
Lobatto IIIC & 1.41 & 1.00 & 1.79 & 2.12 & 1.06 & 1.00 & 2.42 & 1.17 \\\hline
  \end{tabular}
  \caption{Bounds on $\kappa(\mathcal{P}_{\gamma_*})$ from \Cref{cor:cond} and
  \eqref{eq:kappa_gamma*} for Gauss, Radau IIA, and Lobatto IIIC integration,
  with 2--5 stages. Each column within a given set of stages corresponds
  to either a real eigenvalue, $\lambda_1 = \eta$, or a conjugate pair of eigenvalues,
  e.g., $\lambda_{2,3}^\pm = \eta \pm \mathrm{i}\beta$, of
  $A_0^{-1}$.}\label{tab:cond}
\end{table}

}

{\Cref{cor:cond} introduces a modified constant for preconditioning. To compare
with a naive approach of $\delta=\gamma=\eta$ (i.e., preconditioning by ignoring
the $\beta^2$ term in $\mathcal{Q}_\eta$ \eqref{eq:imag1}), one can derive a
worst-case condition number of $1+\beta^2/\eta^2$ for SPD operators and
$\approx (1 + \beta^2/\eta^2)^{3/2}$ for skew symmetric operators, squaring
and cubing the worst-case condition number derived for $\gamma_*$ in
\Cref{cor:cond}, respectively.} Numerical tests indicate using the modified
constant $\gamma_*$ as opposed to $\eta$ is particularly important for
hyperbolic-type problems, {which tend to have dominant imaginary eigenvalues,
even if the spatial discretization is not skew symmetric.} Indeed, one example
in \Cref{sec:numerics:dg} demonstrates an almost $6\times$ reduction in
iteration count achieved by using $\gamma_*$ instead of $\eta$.

\begin{remark}[Inexact preconditioning]\label{sec:inexact-precond}
In practice, fully converging $(\gamma_* I - \widehat{\mathcal{L}})^{-1}$
each iteration as a preconditioner is often not desirable due to the cost
of performing a full linear solve. Here, we propose
applying a Krylov method to $\mathcal{Q}_\eta:=(\eta^2+\beta^2)I - 2\eta\widehat{\mathcal{L}} +
\widehat{\mathcal{L}}^2$ by computing the operator's action (that is, not fully constructing
it), and preconditioning each Krylov iteration with \textit{two} applications of a sparse
parallel preconditioner for $(\gamma_* I - \widehat{\mathcal{L}})$, approximating the action
of $(\gamma_* I - \widehat{\mathcal{L}})^{-2}$.

Analogous to standard block-preconditioning techniques, this approximate inverse
approach is often (but not always) more efficient than computing a full inverse
each iteration. However, \textit{it is important that the underlying preconditioner
provides a good approximation}.
Fortunately, for difficult problems without highly effective
preconditioners, it is straightforward to apply either multiple inner fixed-point iterations
or an inner Krylov iteration (wrapped with a flexible outer Krylov method
\cite{Notay2000,saad1993flexible}) to ensure robust (outer) iterations.
In \Cref{sec:numerics:dg:diff}, a
numerical example is shown where the proposed method diverges using a single inner
fixed-point iteration as a preconditioner for $(\gamma_* I - \widehat{\mathcal{L}})$, but
three (or more) inner fixed-point iterations yields fast, stable convergence.
\end{remark}

\subsection{Algorithm}\label{sec:solve:alg}

{

We conclude this Section by providing a step-by-step description of the new
method in \Cref{alg:irk}, which computes the solution $\mathbf{u}_{n+1}$ at
time $t_{n+1}$ using the update formula in \eqref{eq:update2}. Bullets
provide additional discussion on some nuances of the implementation.

\begin{algorithm}
  \caption{Advance $\mathbf{u}_n$ to $\mathbf{u}_{n+1}$. Assume even $s$, and $A_0^{-1}$ has $s/2$ complex-conjugate eigenvalue pairs $(\eta_i \pm \mathrm{i} \beta_i)_{i = 1}^{s/2}$.
    \label{alg:irk}}
  \begin{algorithmic}[1]
	\State{Evaluate $\mathbf{f} \equiv \mathbf{f}(\mathbf{u}_n,t_n)$}\Comment{RHS of $Ns \times Ns$ system \eqref{eq:k0}}
	\vspace{1.5ex}
	
	\Statex{// Form RHS of linear system: $\mathbf{z} = [(\mathbf{b}_0^T A_0^{-1} \otimes I_N) \adj({\cal M}_s)] [(I_s \otimes M^{-1})\mathbf{f} ]$}
	
	\Let{$\mathbf{z}$}{$0$}
	
	\For{$i = 1 \to s $}
	
	\Let{$\mathbf{z}$}{$\mathbf{z} + R_i(\widehat{\mathcal{L}}) (M^{-1} \mathbf{f}_i)$} \label{alg:adj}
	
	\EndFor

	\vspace{1ex}
	\Statex{// Solve linear system: $P_s(\widehat{{\cal L}}) \mathbf{y} = \prod \limits_{i=1}^{s/2} {\cal Q}_{\eta_i} \mathbf{y} = \mathbf{z}$,\, where ${\cal Q}_{\eta_i} \coloneqq (\eta_i I - \widehat{{\cal L}})^2 + \beta_i^2 I$}
	
    \For{$i = s/2 \to 1 $}\Comment{Solve ${\cal Q}_{\eta_i} \mathbf{y} = \mathbf{z}$}
    
    \Let{$\mathbf{z}$}{$M \mathbf{z}$}\Comment{Scale ${\cal Q}_{\eta_i} \mathbf{y} = \mathbf{z}$ by $M$}

	\Let{$\mathbf{y}$}{Krylov$\big(M{\cal Q}_{\eta_i}$, $\mathbf{z}$,  ${\cal P} M^{-1}{\cal P}\big)$}
	\Comment{Inner preconditioner ${\cal P} \sim (\gamma_* M - \delta t {\cal L})^{-1}$}
	\label{alg:krylov}
	
	\Let{$\mathbf{z}$}{$\mathbf{y}$}
	\Comment{Set RHS for next $i$}
	
    \EndFor

	\vspace{1ex}
    \Statex{// Get IRK solution at new time: $\mathbf{u}_{n+1} = \mathbf{u}_{n} + \delta t P_s(\widehat{\mathcal{L}})^{-1} \mathbf{z}$}
    
    \Let{$\mathbf{u}_{n+1}$}{$\mathbf{u}_n + \delta t \mathbf{y}$}
        \Comment{IRK solution at $t_{n+1}$}
	
  \end{algorithmic}
\end{algorithm}

\begin{itemize}
\item Line \ref{alg:adj} of \Cref{alg:irk}: The RHS vector in the linear system of \eqref{eq:update2} can be expressed as $\mathbf{z} = [(\mathbf{b}_0^T A_0^{-1} \otimes I_N) \adj({\cal M}_s)] [(I_s \otimes M^{-1})\mathbf{f} ] = \sum_{i = 1}^s  R_i(\widehat{\mathcal{L}}) (M^{-1} \mathbf{f}_i)$. Here $R_i$ is a polynomial of degree $s$ that results from taking the inner product of $\mathbf{b}_0^T A_0^{-1}$ with the $i$th column of the matrix $\adj(A_0^{-1} - x I)$, where $x$ is a scalar variable.

The coefficients of $R_i$ are precomputed with high precision (e.g., in Mathematica), and
after forming the vector $M^{-1} \mathbf{f}_i$, the action of $R_i(\widehat{\mathcal{L}})$ is applied using a Horner-like scheme, which only requires computing the action of $\widehat{\mathcal{L}}$ $s$ times. Recall that the (potentially dense) matrix $\widehat{\mathcal{L}} = \delta t M^{-1} \mathcal{L}$ is not formed, but its action is computed using that of $M^{-1}$ and $\mathcal{L}$.

\item Line \ref{alg:krylov} of \Cref{alg:irk}: $\mathbf{x} \gets \textrm{Krylov}(A,\mathbf{b}, B)$ means apply a Krylov method to solve $A \mathbf{x} = \mathbf{b}$, with left or right preconditioning $B \approx A^{-1}$. In Line \ref{alg:krylov}, the inner preconditioner ${\cal P}$ is some approximation to $(\gamma_* M - \delta t {\cal L})^{-1}$, such as one multigrid iteration (for example, see \Cref{sec:inexact-precond}), and the full preconditioner consists of applying ${\cal P}M^{-1}{\cal P}$. In each Krylov iteration, the operator $M{\cal Q}_{\eta_i}$ is not formed, but its action is computed using a Horner-like scheme.
\end{itemize}

}

\section{Numerical results}\label{sec:numerics}

{Numerical results consider Gauss, RadauIIA, and LobattoIIIC IRK methods, as well
as several SDIRK methods for comparison: 2-stage, 2nd-order L-stable SDIRK \cite[Eq. 221]{kennedy16}
($\gamma = (2-\sqrt{2})/2$), 2-stage, 3rd-order A-stable SDIRK \cite[Eq. 223]{kennedy16}
($\gamma = (3+\sqrt{3})/3$), 3-stage, 3rd-order L-stable SDIRK \cite[Eq. 229]{kennedy16},
3-stage, 4th-order A-stable SDIRK \cite[Eq. (6.18)]{hairer96}, and
5-stage, 4th-order L-stable SDIRK \cite[Table 6.5]{hairer96}.}

{
For some problems, runtime comparisons are made between the current IRK algorithm and those proposed in \cite{staff06} and \cite{rana2020new}. The algorithms from \cite{staff06} and \cite{rana2020new} solve the stage equations \eqref{eq:k0} with an iterative solver, such as GMRES, for example, using a block preconditioner based on the Butcher matrix $A_0$. 

The block preconditioners are chosen to have a block triangular structure, such that they can be applied via forward/backward substitution. During the application of the preconditioners, exact inverses of the diagonal blocks are approximated with an inexpensive iterative method, such as a single multigrid cycle, for example. 

Of all the preconditioners proposed in Staff et al. \cite{staff06}, we show results for the one that uses a lower-triangular splitting of $A_0$, which we refer to as ``GSL'', since we find it has the smallest runtime. 

From Rana et al. \cite{rana2020new} we compare with the $LD$ preconditioner,
which uses a preconditioner based on the $LD$ component of an $LDU$ factorization of $A_0$,
which we refer to as ``LD.''
}

\subsection{Finite-difference advection-diffusion}\label{sec:numerics:fd}

In this section, we consider a constant-coefficient advection-diffusion problem
discretized in space with high-order finite-differences. An exact solution to
this problem is used to demonstrate the high-order accuracy of the IRK methods,
and the robustness of the algorithms developed in the previous section with
respect to mesh resolution. Specifically, we solve the PDE

\begin{align}
\label{eq:FD_ex} u_t + 0.85 u_x + u_y = 0.3 u_{xx} + 0.25 u_{yy} + s(x,y,t),
\quad (x,y,t) \in (-1,1)^2 \times (0,2],
\end{align}

on a periodic spatial domain. The source term $s(x,y,t)$ is chosen such that
the solution of the PDE is
$u(x,y,t)=\sin^4(\pi/2[x-1-0.85t]) \sin^4(\pi/2 [y-1-t]) \exp(-[0.3+0.25]t)$.

We consider tests using IRK methods of orders three, four, seven, and eight. The
3rd- and 4th-order IRK methods are paired with 4th-order
central-finite-differences in space, and the 7th- and 8th-order methods with
8th-order central-finite-differences in space. In all cases, a time-step of
$\delta t = 2 h$ is used, with $h$ denoting the spatial mesh size, and results
are run on four cores. Due to the
diffusive, but non-SPD nature of the spatial discretization, we apply GMRES(30)
preconditioned by a classical algebraic multigrid (AMG) method in the
\textit{hypre} library \cite{Falgout:2002vu}. Specifically, we
use classical interpolation (type 0), Falgout coarsening (type 6) with a strength
tolerance $\theta_C = 0.25$, zero levels of aggressive coarsening, and
$L_1$-Gauss--Seidel relaxation (type 8), with a relative stopping
tolerance of $10^{-13}$. A single iteration of AMG is applied to approximate
$(\gamma_* I - \delta t {\cal L})^{-1}$.

In \Cref{fig:FD_ex}, discretization errors are shown for different IRK
methods, alongside the average number of AMG iterations needed per time step.
The expected asymptotic convergence rates (black dashed lines in the left panel)
are observed for all discretizations.\footnote{An exception here is A--SDIRK(4),
which appears to be converging with a rate closer to three than four; however,
further decreasing $\delta t$ (not shown here) confirms 4th-order convergence
is achieved eventually.}

\begin{figure}[!htb]
\centerline{
\includegraphics[scale = 0.41]{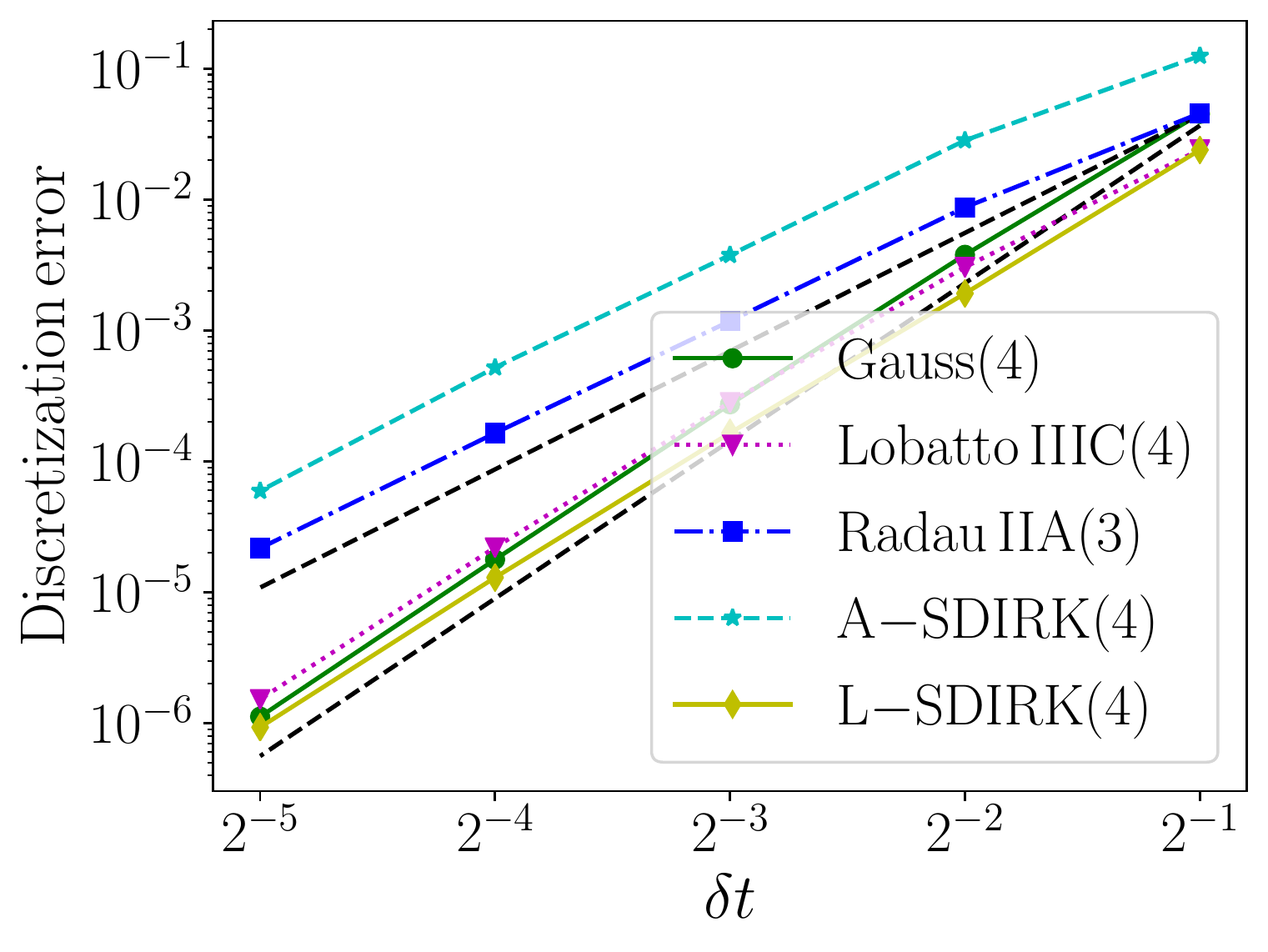}
\quad
\includegraphics[scale = 0.41]{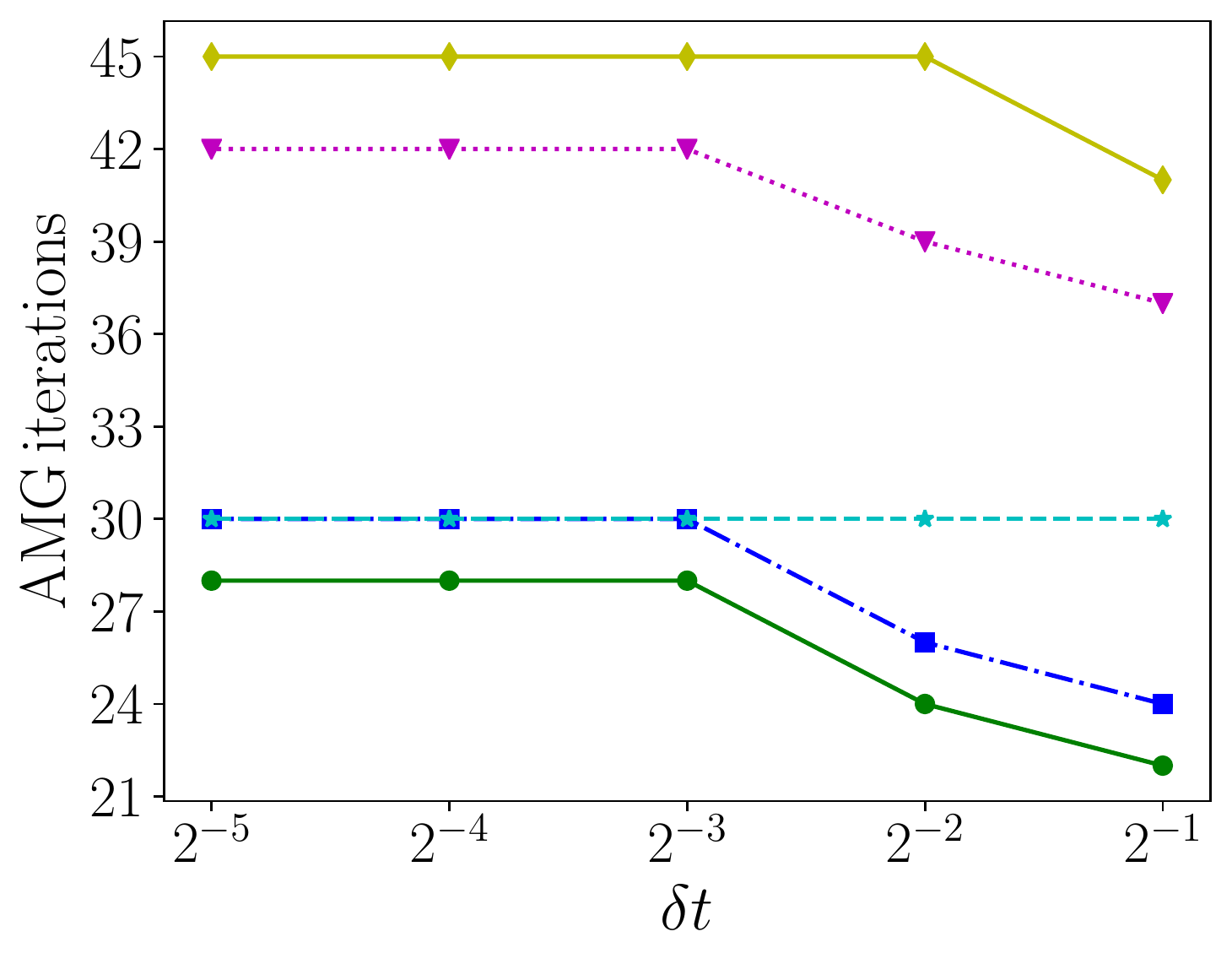}}
\centerline{
\includegraphics[scale = 0.41]{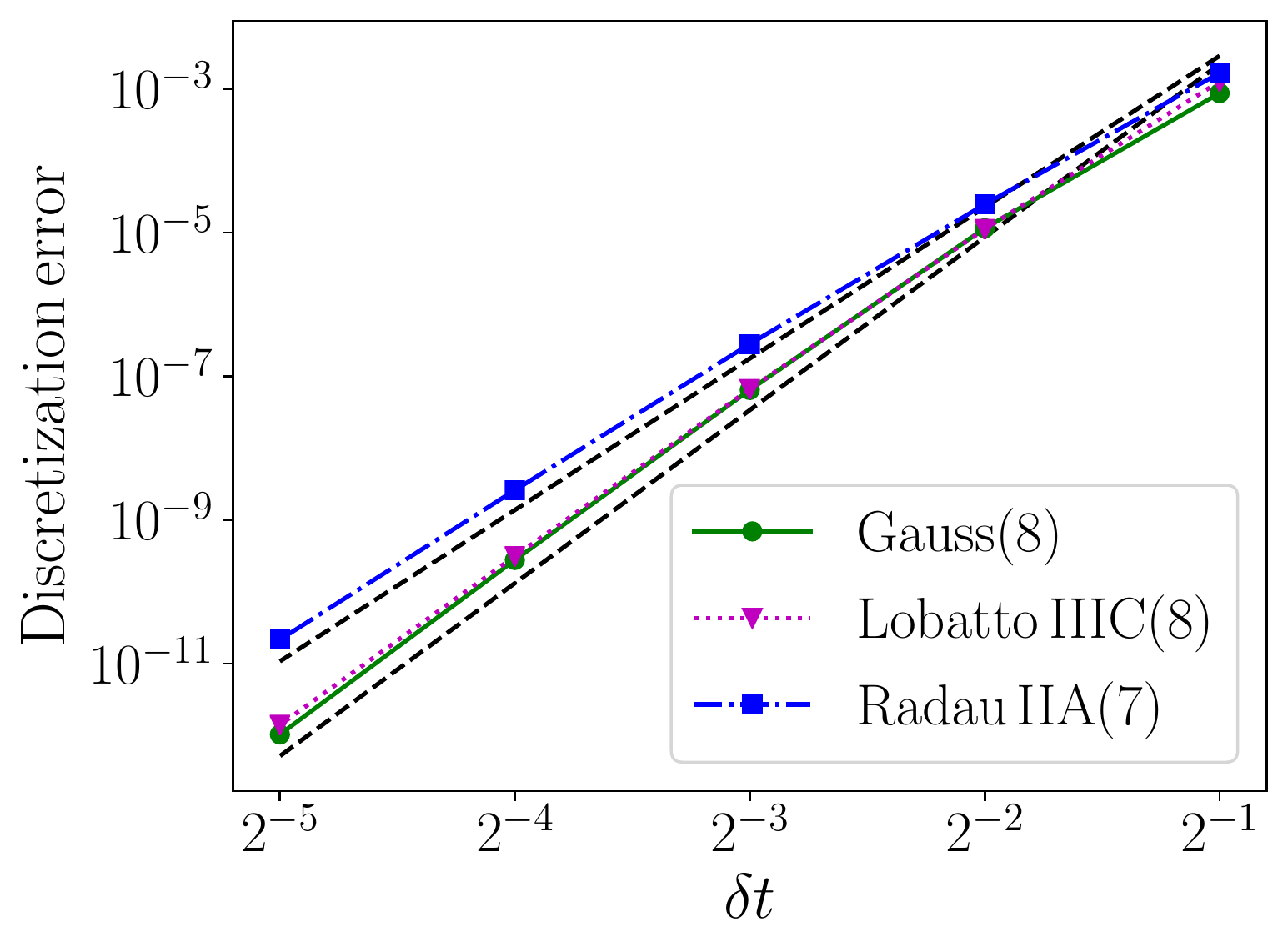}
\quad
\includegraphics[scale = 0.41]{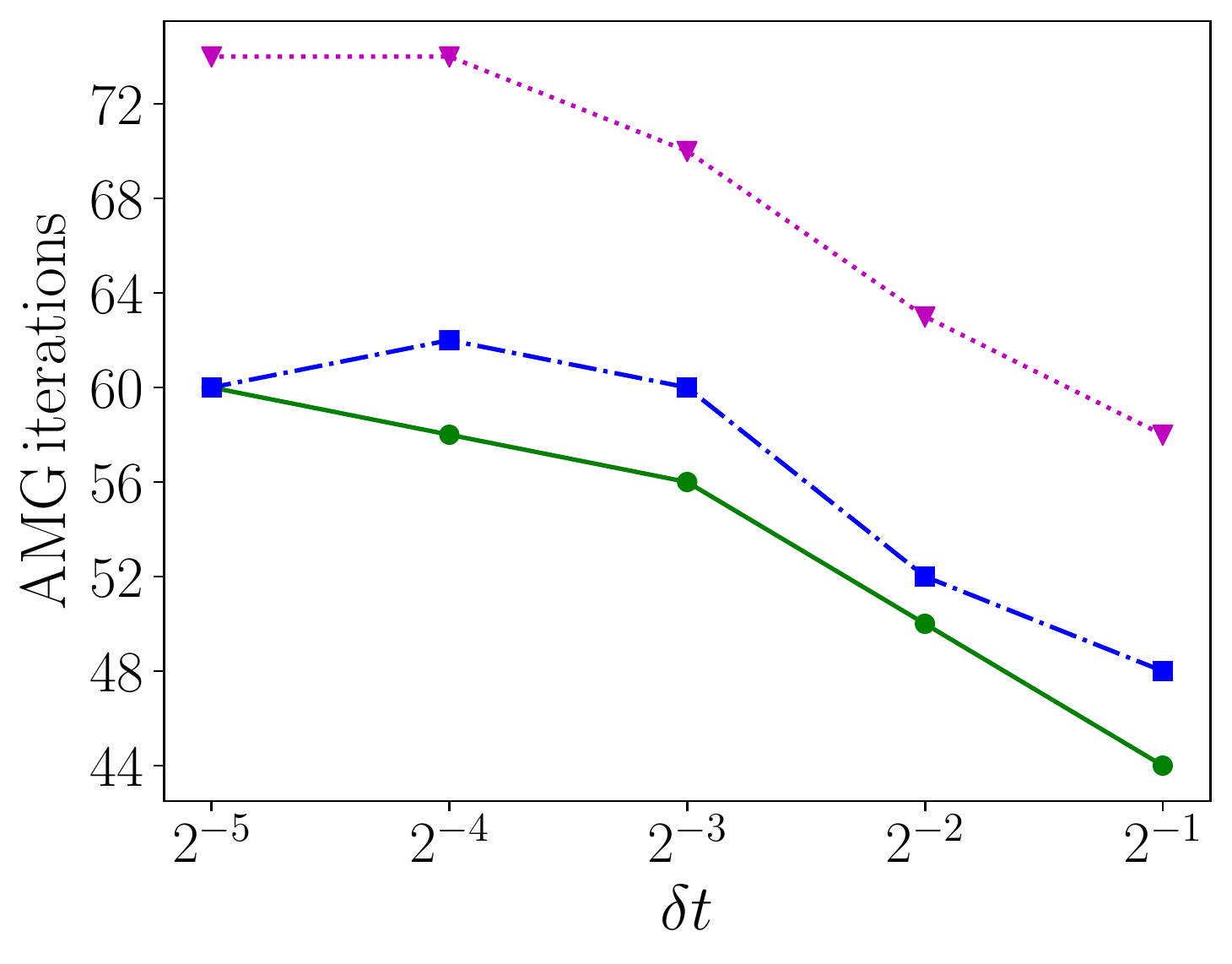}}
\caption{Finite-difference advection-diffusion problem \eqref{eq:FD_ex}. $L_{\infty}$-discretization errors at $t = 2$ as a function of time-step $\delta t$ are shown on
the left for various discretizations of approximately 4th order (top) and 8th order (bottom). Black, dashed lines with slopes of three and four are shown (top), as are those with slopes of seven and eight (bottom). Plots on the right show the average number of AMG iterations per time step.
For time-step size $\delta t = 2^{-\ell}$, the linear systems are of size $n_x \times n_y = 2^{\ell + 2} \times 2^{\ell + 2}$. }
\label{fig:FD_ex}
\end{figure}

The preconditioner appears robust with respect to mesh and problem size, since
the average number of AMG iterations per time step (which is a proxy for the
number of GMRES iterations) remains roughly constant as the the mesh is refined.
Of the fully implicit methods, the Gauss methods require the fewest AMG iterations, closely
followed by Radau IIA methods, with the Lobatto IIIC methods requiring the most
AMG iterations. This is
consistent with the theoretical estimates in \Cref{tab:cond}.
Note that while Gauss and Radau IIA methods have very similar
iteration counts, Gauss converges at one order faster, which can be seen in the
left-hand panel of the figure.

Considering the lower-order methods in the top
row of \Cref{fig:FD_ex}, L--SDIRK(4) (see \cite[Table 6.5]{hairer96}), a 5-stage, 4th-order, L-stable SDIRK
method requires the most AMG iterations of all methods. A--SDIRK(4) (see \cite[eq. (6.18)]{hairer96}), a 3-stage,
4th-order, A-stable SDIRK method, requires far fewer AMG iterations than
L--SDIRK(4). However, A--SDIRK(4) yields a significantly larger discretization
error than the other 4th-order schemes, and takes longer to reach its asymptotic
convergence rate. Thus, in terms of solution accuracy as a function of computational
work, the new preconditioner with 4th-order Gauss integration is the clear winner
for this particular test problem, requiring roughly half the AMG iterations
of the commonly used L-stable SDIRK4 scheme.

\Cref{tab:FDex_block_comparison} shows the runtime of the block-preconditioning approaches of GSL \cite{staff06} and LD \cite{rana2020new} (see the introduction of \Cref{sec:numerics}) relative to the current approach that uses complex-conjugate preconditioning. 

For the GSL and $LD$ solves, GMRES(30) is used as the solver with a relative stopping tolerance of $10^{-13}$, and a single iteration of AMG is used to approximate the inverses of the diagonal blocks in the block lower triangular preconditioners. These AMG methods use the same settings as those described above.

In all cases, the runtime of the current IRK algorithm is smaller than those using block preconditioning, demonstrating the competitiveness of our approach with existing ones, and the advantages of using an optimized preconditioner. 

\begin{table}[!ht]
\renewcommand{\arraystretch}{1.15}
  \centering
  \begin{tabular}{| c | c | c | c |}  
  \hline
  & \multicolumn{3}{c|}{Runtime relative to current approach} \\\hline
& Current & GSL \cite{staff06} & $LD$ \cite{rana2020new} \\\hline
  Gauss(4) & 1 & 1.24 & 1.21 \\\hline
 Radau IIA(3) & 1 & 1.44 & 1.16 \\\hline
  Lobatto IIIC(4) & 1 & 2.09 & 1.86 \\\hline
  Gauss(8) & 1 & 1.64 & 1.57 \\\hline
  Radau IIA(7) & 1 & 1.92 & 1.64 \\\hline
  Lobatto IIIC(8) & 1 & 2.97 & 2.22 \\\hline
  \end{tabular}
  \caption{Finite-difference advection-diffusion problem \eqref{eq:FD_ex}. Runtime of block-preconditioning IRK algorithms GSL \cite{staff06} and $LD$ \cite{rana2020new} relative to the runtime of the current algorithm that uses complex-conjugate preconditioning.Each relative runtime measurement has been calculated as the mean of the relative runtimes for solving each of the problem sizes shown in \Cref{fig:FD_ex}.
  }
  \label{tab:FDex_block_comparison}
\end{table}

\subsection{DG in space advection-diffusion}\label{sec:numerics:dg}

Here we consider a more difficult advection-diffusion problem, discretized using
high-order DG finite elements {in space (independent of the time
discretization; i.e., no relation to \Cref{rem:dg}). We demonstrate the
effectiveness of the new preconditioning and ``optimal $\gamma_*$'' on more complex
flows (\Cref{sec:numerics:dg:const}), examine order reduction in DIRK and IRK methods
(\Cref{sec:numerics:dg:red}),
study the use of multiple ``inner'' preconditioning iterations to approximate
$(\gamma_* M - \delta t \mathcal{L})^{-1}$ (or even inner Krylov acceleration;
\Cref{sec:numerics:dg:diff}),
and finally make a comparison with other state-of-the-art IRK solvers from
\cite{staff06, rana2020new,pazner17} (\Cref{sec:numerics:dg:comp}).}

The governing equations in spatial domain $\Omega = [0,1] \times [0,1]$ are given by
\begin{equation} \label{eq:adv-diff}
	u_t + \nabla \cdot ( \bm\beta u  - \varepsilon \nabla u ) = f
\end{equation}
where $\bm\beta(x,y) : = (\cos(4\pi y), \sin(2 \pi x))^T$
is the prescribed velocity field and $\varepsilon$ the diffusion coefficient.
Dirichlet boundary conditions are weakly enforced on $\partial\Omega$, and
\eqref{eq:adv-diff} is discretized with an upwind DG method \cite{Cockburn2001},
where diffusion terms are treated with the symmetric interior penalty method
\cite{Arnold1982,Arnold2002}. The resulting finite element problem is to find
$u_h \in V_h$ such that, for all $v_h \in V_h$,
\[
	\begin{multlined}
	\int_\Omega \partial_t (u_h) v_h \, dx
	- \int_\Omega u_h \bm\beta \cdot \nabla_h v_h \, dx
	+ \int_\Gamma \widehat{u_h} \bm\beta \cdot \llbracket v_h \rrbracket \, ds
	+ \int_\Omega \nabla_h u_h \cdot \nabla_h v_h \, d x \\
	- \int_\Gamma \{ \nabla_h u_h \} \cdot \llbracket v_h \rrbracket \, ds
	- \int_\Gamma \{ \nabla_h v_h \} \cdot \llbracket u_h \rrbracket \, ds
	+ \int_\Gamma \sigma \llbracket u_h \rrbracket \cdot \llbracket v_h \rrbracket \, ds
	= \int_\Omega f v_h \, dx,
	\end{multlined}
\]
where $V_h$ is the DG finite element space consisting of piecewise polynomials
of degree $p$ defined on elements of the computational mesh $\mathcal{T}$ of the
spatial domain $\Omega$. No continuity is enforced between mesh elements. Here,
$\nabla_h$ is the broken gradient, $\Gamma$ denotes the skeleton of the mesh,
and $\{ \cdot \}$ and $\llbracket \cdot \rrbracket$ denote the average and jump
of a function across a mesh interface. $\widehat{u_h}$ is used to denote the
upwind numerical flux. {The parameter $\sigma$ is the \textit{interior
penalty parameter}, which must be chosen sufficiently large to obtain a stable
discretization \cite{Arnold2002}. In particular, we choose $\sigma \sim p^2/h$;
see also \cite{Shahbazi2005} for an explicit expression for this parameter.} This
discretization has been implemented in the MFEM finite element framework
\cite{Anderson2020}, and uses AMG preconditioning with approximate ideal
restriction (AIR) \cite{Manteuffel:2019,Manteuffel:2018} (after first scaling by
the inverse of the block-diagonal mass matrix).

\subsubsection{Order reduction and wall-clock-time}
\label{sec:numerics:dg:red}

{
We begin by considering a manufactured solution. We set diffusion coefficient
$\epsilon = 10^{-4}$ and choose initial conditions,
Dirichlet boundary conditions, and time-dependent forcing function such
that \eqref{eq:adv-diff} with constant coefficient advection, $\boldsymbol{\beta} = [1,1]$
satisfies the exact space-time solution $u_* = \sin(2\pi(x-t))\sin(2\pi(y-t))$. This
results in weakly imposing time-dependent Dirichlet boundary conditions, a constraint
known to cause order reduction in DIRK methods due to low stage order
\cite{rosales2017spatial}. 4th-order elements are used on a mesh with $h\approx
0.0078$, which results in finite element approximation of the initial condition
with $\ell^2$-error $3\cdot 10^{-12})$ (i.e., this is roughly the expected limit
of accuracy that can be obtained after time integration).

We first consider the order of convergence for L-stable SDIRK methods compared
with RadauIIA and Gauss IRK methods. Error is measured at time $t=2$ against the exact
solution in the $\ell^2$-norm and a broken $\mathcal{H}^1$-norm,
and results are shown in \Cref{fig:dg-err}. Order reduction is most pronounced for
4th-order L-stable SDIRK, achieving only second order in the broken $\mathcal{H}^1$-norm
and order $2.7$ in the $\ell^2$-norm as $\delta t\to 0$. 3rd-order L-stable SDIRK
achieves the expected order in $\ell^2$-norm, but only 2nd-order in broken
$\mathcal{H}^1$-norm. In contrast, 3rd order Radau achieves just under 3rd order
in the $\mathcal{H}^1$-norm and 3rd order in $\ell^2$, and 4th-order Gauss achieves
4th order in both norms for sufficiently small $\delta t$. Most higher-order Radau
and Gauss methods observe some order reduction in both norms, achieving
accuracy somewhere between the stage-order \cite[Section IV.5]{hairer96} ($s$)
and full order ($2s-1$ and $2s$, respectively), but still obtain significantly
smaller error than the lower-order schemes.

\begin{figure}[!htb]
  \centering
  \begin{subfigure}[b]{0.475\textwidth}
    \includegraphics[width=\textwidth]{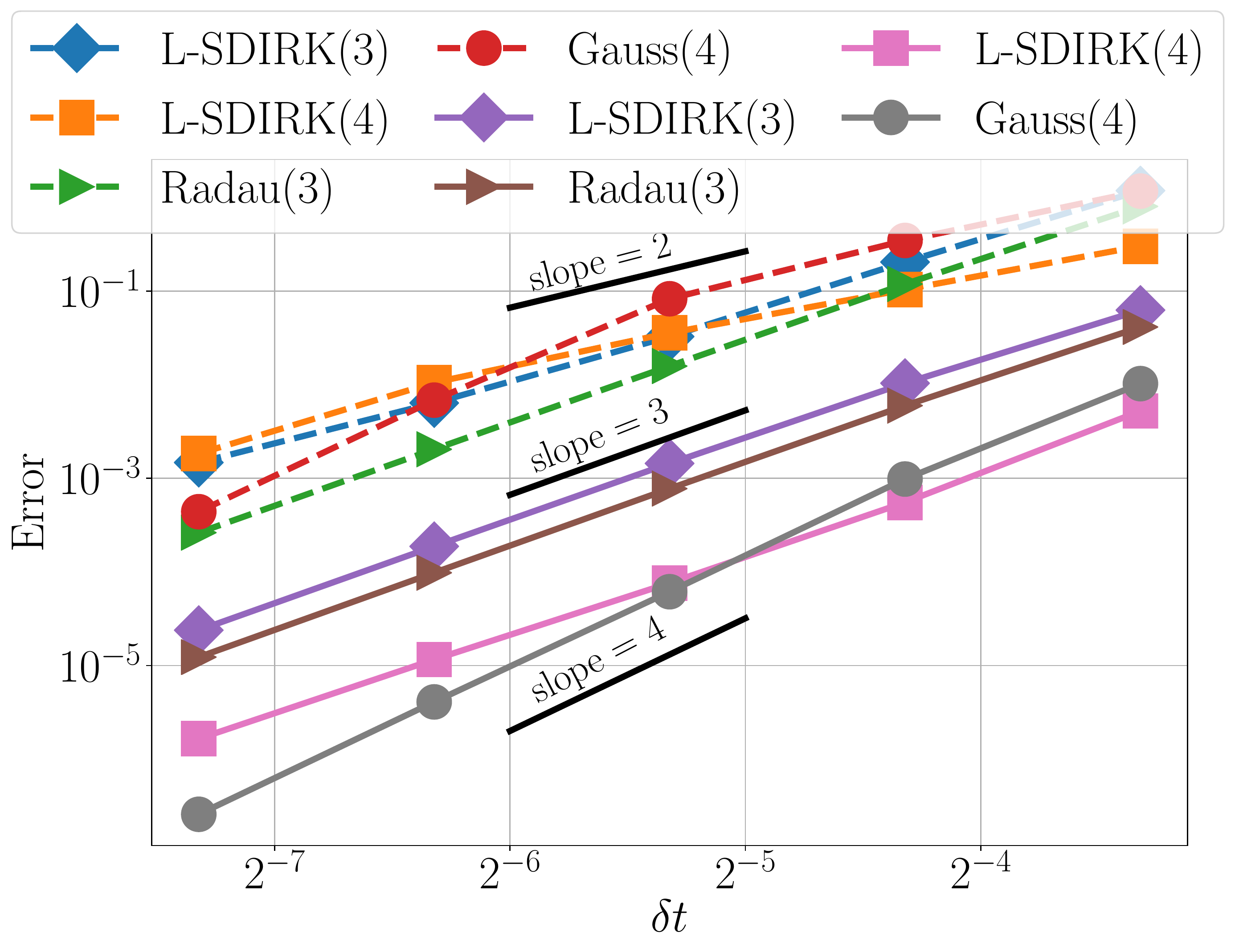}
    
  \end{subfigure}
   \begin{subfigure}[b]{0.475\textwidth}
    \includegraphics[width=\textwidth]{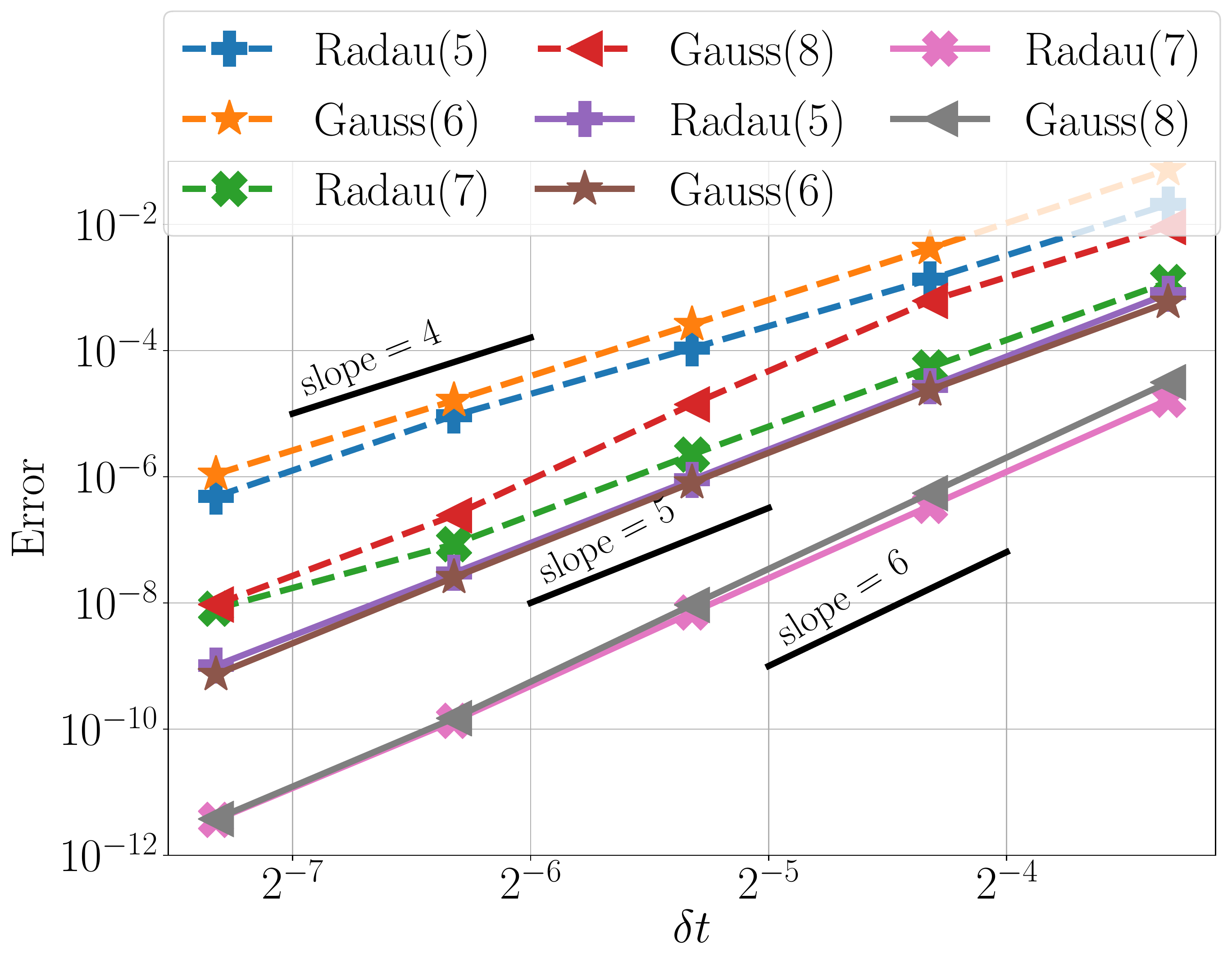}
    
  \end{subfigure}
  \\\vspace{2ex}
      \caption{$\ell^2$-error (solid lines) and broken $\mathcal{H}^1$-error (dashed lines) at time $t=2$.}
      \label{fig:dg-err}
\end{figure}

We now compare accuracy as a function of wallclock time of standard A-stable and
L-stable SDIRK methods with IRK methods using the preconditioners developed here.
\emph{For a given wall-clock time, the IRK methods of a given order yield smaller
error than equivalent-order SDIRK methods in all cases considered.}
Moreover, for a fixed wallclock time, the high-order IRK methods can achieve as
much as two orders of magnitude reduction in error compared with lower-order schemes.

\begin{figure}[!htb]
  \centering
  \begin{subfigure}[b]{0.475\textwidth}
    \includegraphics[width=\textwidth]{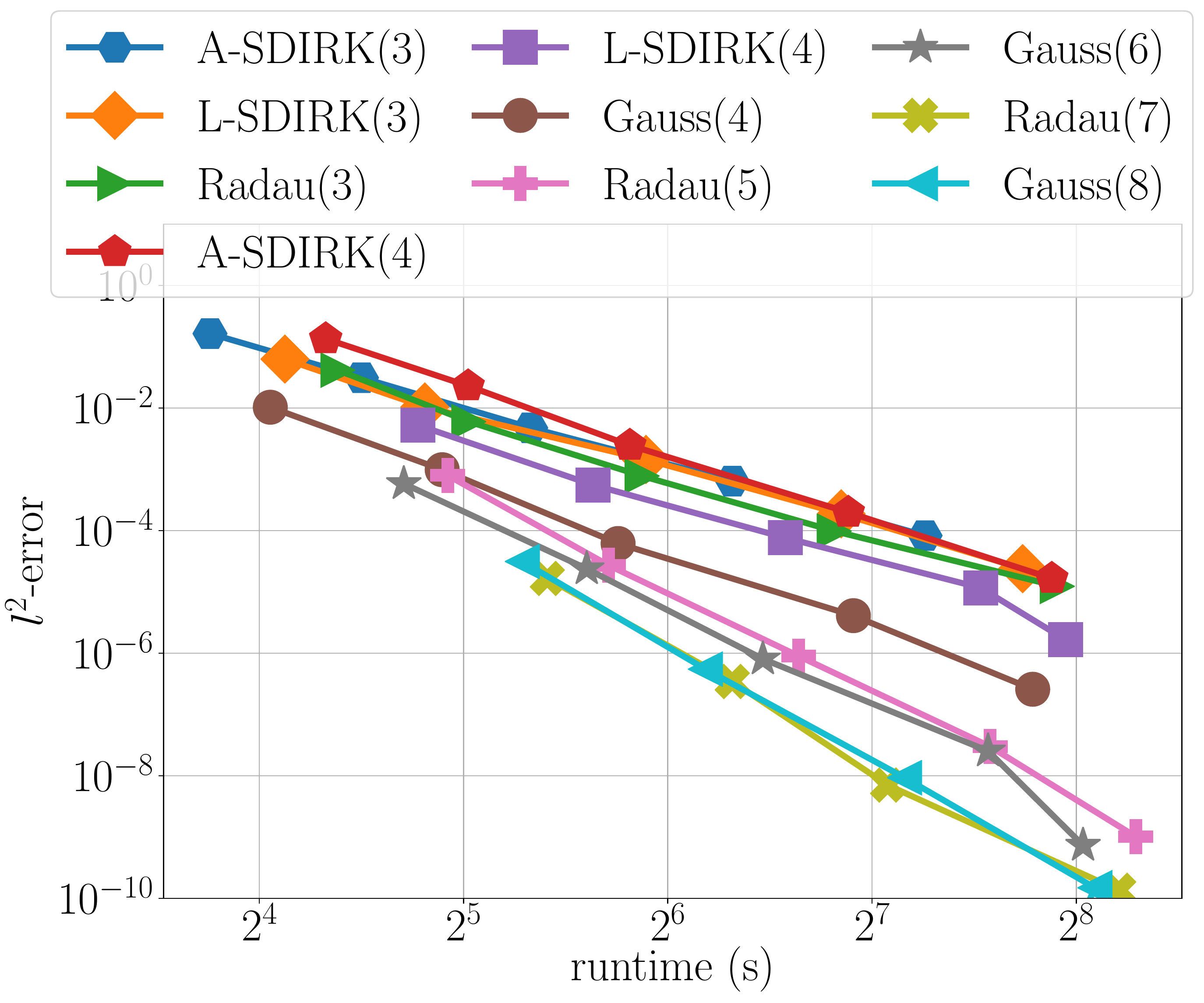}
  \end{subfigure}
   \begin{subfigure}[b]{0.475\textwidth}
    \includegraphics[width=\textwidth]{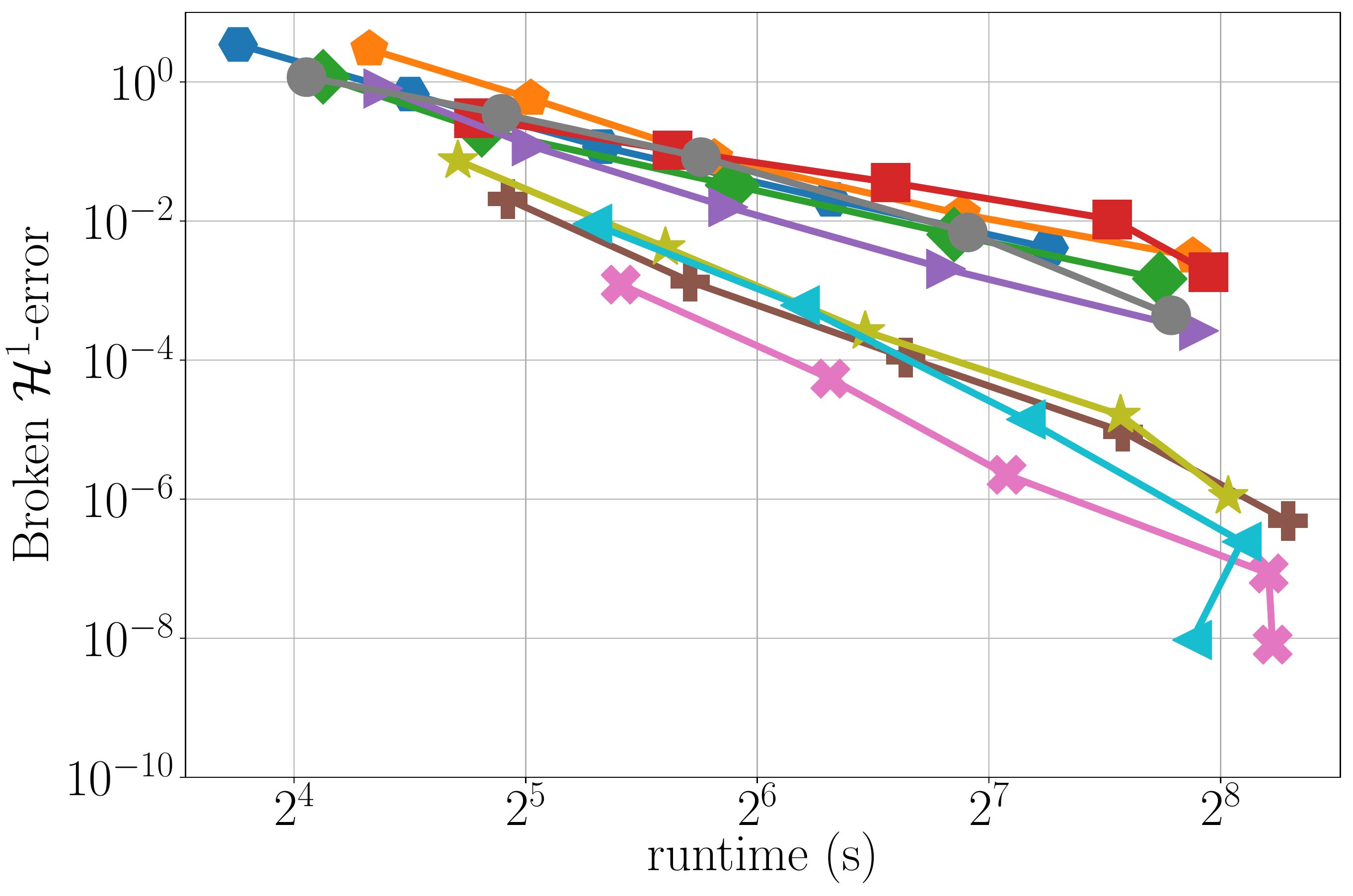}
  \end{subfigure}
  \\\vspace{2ex}
      \caption{$\ell^2$-error (right) and broken $\mathcal{H}^1$-error (left)
      at time $t=2$ as a function of wallclock time in seconds.}
      \label{fig:dg-time}
\end{figure}

}

\subsubsection{A hyperbolic example, and preconditioning with $\eta$ vs. $\gamma_*$}
\label{sec:numerics:dg:const}

The DG method is particularly well-suited for advection-dominated problems.
In the following subsections we vary $\varepsilon$ from $0$ (purely advective) to $0.01$.
The velocity field, initial condition, and numerical solution for $\varepsilon = 10^{-6}$
are shown in \Cref{fig:ad_advdiff}.

\begin{figure}[!htb]
  \centering
  \begin{subfigure}[b]{0.3\textwidth}
    \includegraphics[width=\textwidth]{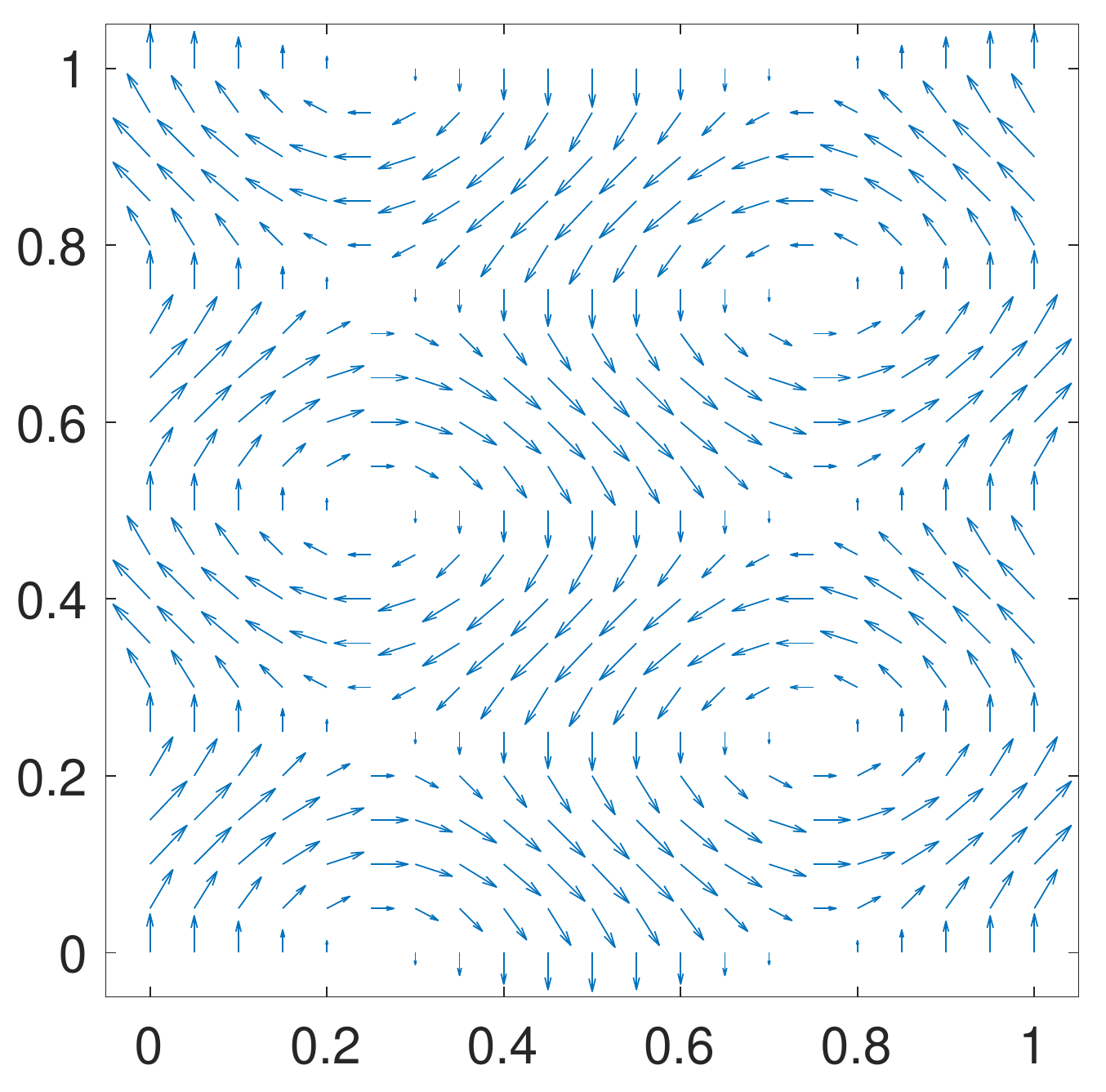}
    \caption{Velocity field}\label{fig:v}
  \end{subfigure}
   \begin{subfigure}[b]{0.3\textwidth}
    \includegraphics[width=\textwidth]{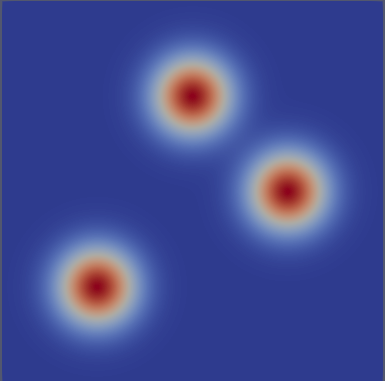}
    \caption{$t = 0$}
  \end{subfigure}
  \begin{subfigure}[b]{0.3\textwidth}
    \includegraphics[width=\textwidth]{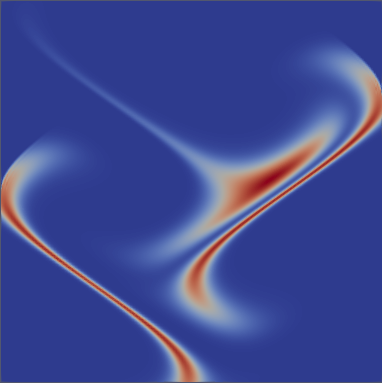}
    \caption{$t = 0.3$}
  \end{subfigure}
  \\\vspace{2ex}
   \begin{subfigure}[b]{0.3\textwidth}
    \includegraphics[width=\textwidth]{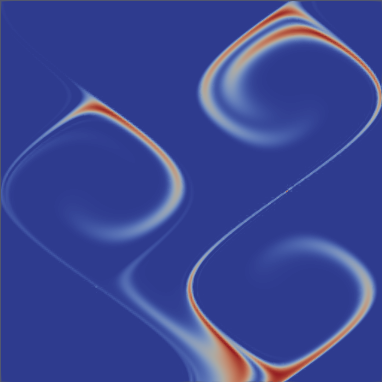}
    \caption{$t = 0.8$}
  \end{subfigure}
   \begin{subfigure}[b]{0.3\textwidth}
    \includegraphics[width=\textwidth]{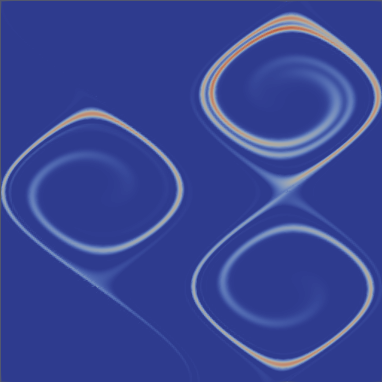}
    \caption{$t = 2.0$}
  \end{subfigure}
  \begin{subfigure}[b]{0.3\textwidth}
    \includegraphics[width=\textwidth]{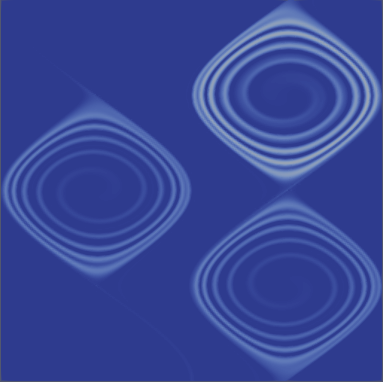}
    \caption{$t = 7.0$}
  \end{subfigure}
  \\\vspace{2ex}
      \caption{DG advection-diffusion problem with velocity field shown in
      subplot (a) and the solution plotted for various time points from
      $t=0$ to $t = 7.0$ in subplots (b-f). Heatmap indicates solution in the
      2d domain, with blue $\mapsto 0$ and red $\mapsto 1$.}
  \label{fig:ad_advdiff}
\end{figure}

First, we demonstrate the effectiveness of using $\gamma_*$ (\Cref{sec:solve:gamma})
instead of $\eta$ in the preconditioner, as well as the robustness of the proposed
method on a fully hyperbolic problem, where most papers have only discussed parabolic
PDEs. Thus, we set the diffusion coefficient $\varepsilon = 0$
and apply AIR as a preconditioner for individual systems
$(\gamma M - \delta t\mathcal{L})$.

AIR was originally designed for upwind DG discretizations of advection
and is well-suited for this problem. We use the \textit{hypre} implementation,
with distance 1.5 restriction with strength tolerance $\theta_R=0.01$, one-point
interpolation (type 100), Falgout coarsening (type 6) with strength tolerance
$\theta_C=0.1$, no pre-relaxation, and forward Gauss Seidel
post-relaxation (type 3), first on F-points followed by a second sweep on
all points. The domain is discretized using 4th-order finite elements on a
structured mesh, and the time step for each integration scheme is chosen
such that the spatial and temporal orders of accuracy match; for example,
for 8th-order integration we choose $\delta t = \sqrt{h}$, for mesh spacing
$h$, so that $\delta t^8 = h^4$. All linear systems are solved to a relative
tolerance of $10^{-12}$. There are a total of 1,638,400 spatial degrees-of-freedom
(DOFs), and the simulations are run on 288 cores on the Quartz machine at
Lawrence Livermore National Lab, resulting in $\sim$5600 DOFs/processor.

\Cref{tab:gamma} shows the average number of AIR iterations to solve for
each pair of stages of an IRK method using $\eta$ and $\gamma_*$ as the preconditioning
constants. Iteration counts are shown for Gauss, Radau IIA, and Lobatto IIIC integration,
with 2--5 stages, and the (factor of) reduction in iteration count achieved using $\gamma_*$
vs. $\eta$ is also shown. For 5-stage Lobatto IIIC integration, $\gamma_*$ yields
almost a $6\times$ reduction in total inner AIR iterations to solve for the
``hard'' stage ($\beta > \eta$), while in no cases is there an increase in
iteration count when using $\gamma_*$.

{

\renewcommand{\arraystretch}{1.15}
\begin{table}[!ht]
  \centering
  \begin{tabular}{|l || r | rr | rr | rrr |}  
  \hline
\multicolumn{9}{|c|}{Gauss}\\\hline
Stages/Order & 2/4 & \multicolumn{2}{c}{3/6} & \multicolumn{2}{|c}{4/8} & \multicolumn{3}{|c|}{5/10} \\\hline
Iterations$(\eta)$ & 17 & 6 & 30 & 11  & 47 & 8 & 16 & 70 \\
Iterations$(\gamma_*)$ & 11 & 6 & 15 & 10 & 19 & 8 & 13 & 23 \\\hline
Speedup & 1.5 & 1.0 & 2.0 & 1.1 & 2.5 & 1.0 & 1.2 & 3.0 \\\hline\hline
\multicolumn{9}{|c|}{Radau IIA}\\\hline
Stages/Order & 2/3 & \multicolumn{2}{c}{3/5} & \multicolumn{2}{|c}{4/7} & \multicolumn{3}{|c|}{5/9} \\\hline
Iterations$(\eta)$ & 12 & 5 & 39 & 11 & 64 & 8 & 16 & 97 \\
Iterations$(\gamma_*)$ & 12 & 5 & 18 & 9 & 21 & 8 & 12 & 25 \\\hline
Speedup & 1.0 & 1.0 & 2.2 & 1.2 & 3.0 & 1.0 & 1.3 & 3.9 \\\hline\hline
\multicolumn{9}{|c|}{Lobatto IIIC}\\\hline
Stages/Order & 2/2 & \multicolumn{2}{c}{3/4} & \multicolumn{2}{|c}{4/6} & \multicolumn{3}{|c|}{5/8} \\\hline
Iterations$(\eta)$ & 8 & 3 & 67 & 11 & 113 & 7 & 17 & 175 \\
Iterations$(\gamma_*)$ & 8 & 3 & 22 & 9 & 26 & 7 & 12 & 30 \\\hline
Speedup & 1.0 & 1.0 & 3.0 & 1.2 & 4.3 & 1.0 & 1.4 & 5.8 \\\hline\hline
  \end{tabular}
  \caption{Average AIR iterations to solve for each stage in an implicit
  Runge-Kutta method using preconditioners $(\eta M - \delta t \mathcal{L})^{-2}$ and
  $(\gamma_* M - \delta t \mathcal{L})^{-2}$, with $\gamma_*$ defined in \eqref{eq:gamma*}.
  The ratio of iterations$(\eta)$/iterations$(\gamma_*)$ is shown in the ``Speedup'' rows.}
  \label{tab:gamma}
\end{table}
}

\subsubsection{Diffusive problems and inner Krylov}\label{sec:numerics:dg:diff}

In \cite{Manteuffel:2019}, AIR was shown to be effective on some DG advection-diffusion
problems, and classical AMG is known to be effective on diffusion-dominated
problems. However, the region of comparable levels of advection and diffusion
remains the most difficult from a multigrid perspective. We use this to
demonstrate how methods developed here require a ``good'' preconditioner
for a backward Euler time step, $(\gamma M - \delta t \mathcal{L})^{-1}$,
in order to converge on more general IRK
methods. Fortunately, ensuring a preconditioner is sufficiently good can be
resolved by appealing to standard block preconditioning techniques, where an inner
iteration is used that applies multiple AIR iterations as a single preconditioner.

Here we consider an analogous problem to above, but set the diffusion coefficient
to $\varepsilon = 0.01$. We use a mesh with spacing $h \approx 0.001$, 2nd-order
DG finite elements, a time step of $\delta t = 0.1$, and three-stage 6th-order
Gauss integration. Altogether, this yields equal orders of accuracy, with time and
space error $\sim10^{-6}$. FGMRES \cite{saad1993flexible} is used for the
outer iteration, which allows for GMRES to be applied in an inner iteration
as a preconditioner for $(\gamma_* M - \delta t \mathcal{L})$. \Cref{fig:dg_o2} plots the
total number of AIR iterations per time step as a function of the number of
AIR iterations applied for each application of the preconditioner, using an
inner GMRES or an inner fixed-point (Richardson) iteration. An advection-dominated
problem with $\varepsilon = 10^{-6}$ is also shown for comparison.

Recall we have three stages, one of which is a single linear system corresponding
to a real eigenvalue, and the other corresponding to a pair of complex conjugate
eigenvalues, which we precondition as in \Cref{sec:solve}. The latter ends up being
the more difficult problem to solve -- for $\varepsilon =0.01$ (\Cref{fig:dg_o2_1e-2}),
the outer FGMRES iteration for the complex conjugate quadratic does not converge in
1000 iterations when using one AIR iteration as a preconditioner.
If two AIR iterations with GMRES are used as a preconditioner,
the FGMRES iteration converges in approximately 130 iterations, each of which
requires two applications of GMRES preconditioned with two AIR iterations,
yielding just over 500 total AIR iterations to converge. Further increasing
the number of AIR iterations per preconditioning yields nice convergence
using inner fixed-point or GMRES, with 150 and 112 total AIR iterations per
time step, respectively. In contrast, \Cref{fig:dg_o2_1e-6} shows that
additional AIR iterations for the advection-dominated case are generally
detrimental to overall computational cost (although the outer iteration
converges slightly faster, it does not make up for the additional linear
solves/iteration).

\begin{figure}[h!]
\centering

  \centering
  \begin{subfigure}[b]{0.475\textwidth}
	\includegraphics[width = \textwidth]{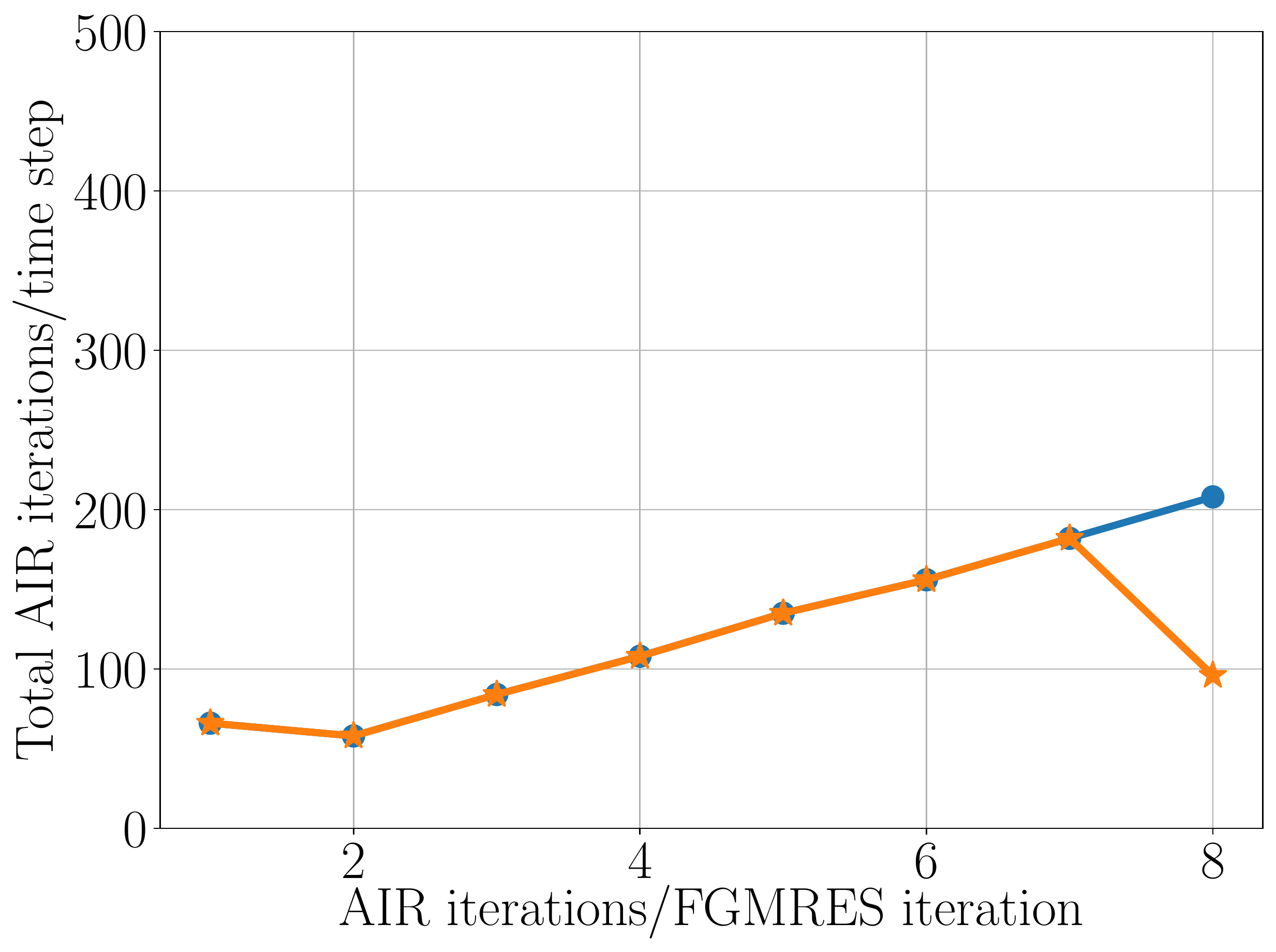}
	\caption{$\varepsilon = 10^{-6}$.}
	\label{fig:dg_o2_1e-6}
  \end{subfigure}
   \begin{subfigure}[b]{0.475\textwidth}
	\includegraphics[width = \textwidth]{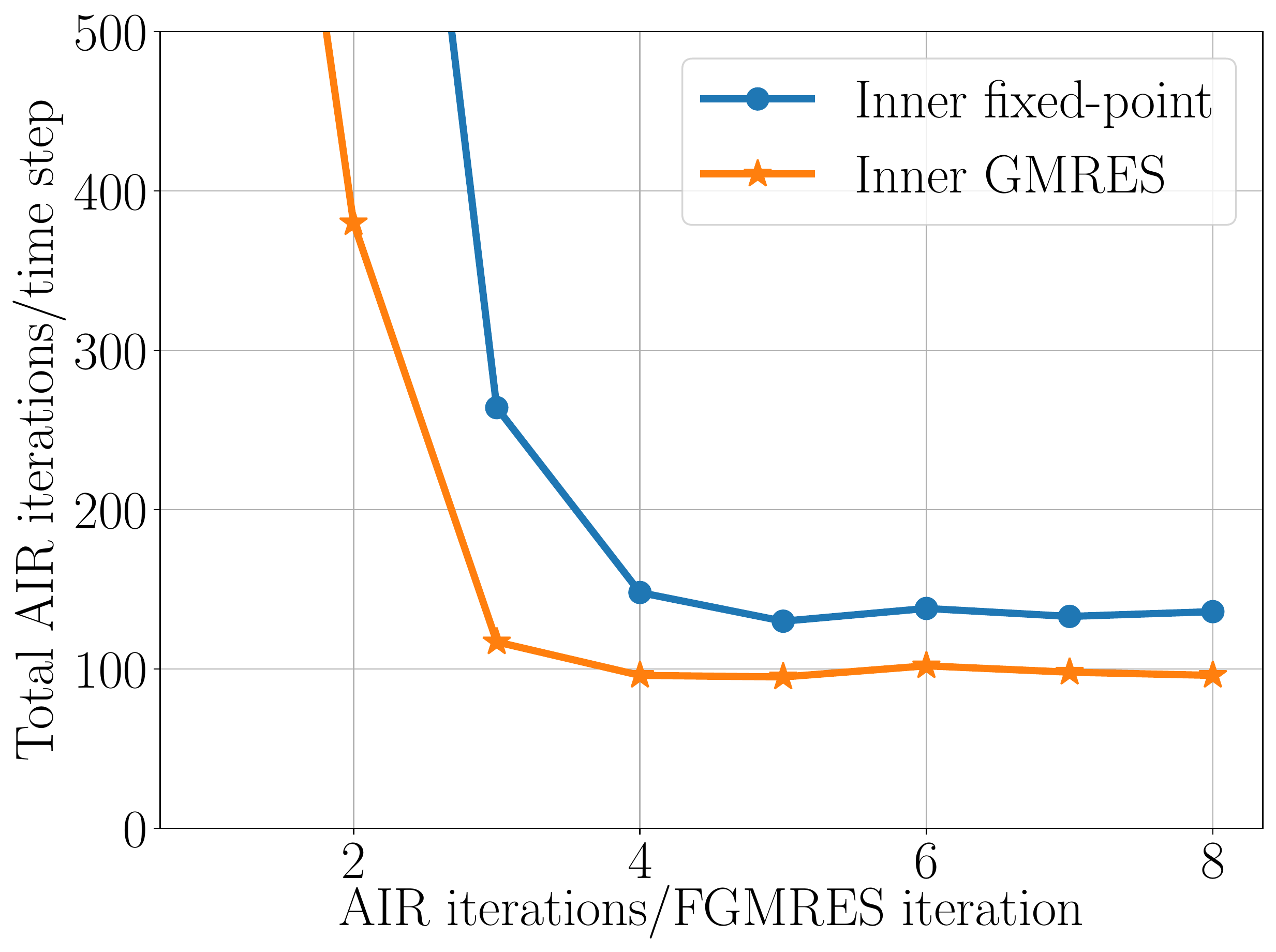}
	\caption{$\varepsilon = 0.01$.}
	\label{fig:dg_o2_1e-2}
  \end{subfigure}
\caption{Total AIR iterations per time step as a function of the number of
inner AIR iterations applied during each application of the preconditioner,
for diffusion coefficient $\varepsilon$.}
\label{fig:dg_o2}
\end{figure}

\subsubsection{Comparison of preconditioners}\label{sec:numerics:dg:comp}
{

We conclude this section by performing a detailed comparison with other
IRK solvers. We consider the three-stage and five-stage Gauss, Radau, and
Lobatto IRK integration schemes, with the velocity field shown in
\Cref{fig:ad_advdiff}, and diffusion coefficients
$\epsilon \in\{0, 10^{-4}, 0.01\}$. For these tests, we (somewhat 
arbitrarily) choose $\delta t = 0.1$, $h\approx 0.002$, and second-order
finite elements, resulting in
2,359,296 DOFs. All simulations are run on 256 cores on the Quartz
supercomputer at Lawrence Livermore National Lab, with 9,360 DOFs/core.
We compare the IRK methods developed here (denoted IRK), with the
GSL \cite{staff06} and LD \cite{rana2020new} block preconditioners
introduced earier, as well as a block ILU preconditioner as in 
\cite{pazner17}. As in \Cref{sec:numerics:dg:diff}, for the
case of $\epsilon =0.01$, we use three inner AIR iterations for
the IRK solver. We only use one inner AIR iteration for the GSL
and LD block preconditioners, as numerical tests indicated that
they neither need nor benefit from inner iterations. \Cref{tab:comp_prec}
presents the wallclock time and average iterations per time step to
take five time steps with each of the methods. Iterations are 
normalized as one preconditioner applied to each or all stages,
to account for the difference between ILU applied to the full IRK
operator and IRK/GSL/LD applied to individual stages, as well
as the use of three vs. one inner iteration. 

We highlight several observations:
\begin{itemize}
	\item In general, the method proposed here is faster than ILU
	in all cases but one (in which the times are very close), while
	offering as much as a $7\times$ speedup in one example (Radau(9)
	$\epsilon=0.01$). In general, ILU will be more competitive on
	a smaller number of MPI processes and less competitive on more
	MPI processes, due to its general degradation in parallel.

	\item The method proposed here is particularly effective on
	higher-order integrators (in this case, the 5-stage examples)
	and advection-dominated problems, in many cases offering a
	$10-15\times$ speedup over the block preconditioning from
	\cite{staff06}. The very new method from \cite{rana2020new}
	is more competitive, typically only $2-5\times$ slower on the
	more advective problems, while performing better than the
	method developed here for all cases of $\epsilon = 0.01$.
	The better performance for $\epsilon = 0.01$ is because the
	method from \cite{rana2020new} appears more robust to not
	having an accurate inner inverse, whereas IRK requires three
	inner AIR iterations for good performance on $\epsilon = 0.01$
	(thus, costing $3\times$ as much for one true
	iteration). However, this property is a blessing and a curse:
	numerical tests also indicated that additional inner iterations
	did \emph{not} improve convergence of \cite{staff06,rana2020new}
	on advective problems, thus limiting their performance to that of
	the outer block preconditioner, which currently lacks robust
	theoretical support. In contrast, the IRK solvers developed
	here are \emph{guaranteed} to be robust, as long as one provides
	a reasonably accurate inner inverse.
\end{itemize}
{

\renewcommand{\arraystretch}{1.15}
\begin{table}[!ht]
  \centering
  \begin{tabular}{| c | l | cccc |}  
  \hline
\parbox[t]{2mm}{\multirow{12}{*}{\rotatebox[origin=c]{90}{3 stages}}}
& \textbf{Gauss(6)} & GSL \cite{staff06} & LD \cite{rana2020new} & ILU \cite{Pazner2019a} & IRK \\\cline{2-6}
& $\epsilon = 0$ & 7.76 (66) & 4.46 (27) & 3.36 (45) & \textbf{3.24} (24) \\ 
& $\epsilon = 10^{-4}$ & 16.4 (67) & 8.85 (28) & \textbf{5.76} (73) & 6.02 (20) \\
& $\epsilon = 0.01$ & 10.5 (51) & \textbf{7.48} (28) & 85 (491) & 14.7 (44)
\\\cline{2-6}
& \textbf{Radau(5)} & GSL \cite{staff06} & LD \cite{rana2020new} & ILU \cite{Pazner2019a} & IRK \\\cline{2-6}
& $\epsilon = 0$ & 11.9 (101) & 5.35 (32) & 5.20 (68) & \textbf{3.81} (27) \\
& $\epsilon = 10^{-4}$ & 25.1 (101) & 10.62 (34) & 7.11 (87) & \textbf{6.42} (22) \\
& $\epsilon = 0.01$ & 17.6 (85) & \textbf{10.32} (41) & 164 (716) & 31.0 (94)
\\\cline{2-6}
&\textbf{Lobatto(4)} & GSL \cite{staff06} & LD \cite{rana2020new} & ILU \cite{Pazner2019a} & IRK \\\cline{2-6}
& $\epsilon = 0$ & 19.3 (168) & 7.86 (47.2) & 10.4 (118.6) & \textbf{4.89} (36) \\
& $\epsilon = 10^{-4}$ & 39.2 (167) & 15.2 (49.2) & 9.43 (109.2) & \textbf{7.91} (25) \\
& $\epsilon = 0.01$ & 26.6 (133) & \textbf{14.3} (57) & 275 (955.8) & 38.9 (115)
\\\hline\hline
\parbox[t]{2mm}{\multirow{12}{*}{\rotatebox[origin=c]{90}{5 stages}}}
& \textbf{Gauss(10)} & GSL \cite{staff06} & LD \cite{rana2020new} & ILU \cite{Pazner2019a} & IRK \\\cline{2-6}
& $\epsilon = 0$ & 24.9 (151) & 11.5 (43) & 12.7 (78) & \textbf{4.40} (21) \\
& $\epsilon = 10^{-4}$ & 55.9 (153) & 25.8 (52) & 11.6 (73) & \textbf{10.1} (22) \\
& $\epsilon = 0.01$ & 28.4 (91) & \textbf{17.9} (41) & 134 (449) & 22.6 (42)
\\\cline{2-6}\cline{2-6}
& \textbf{Lobatto(8)} & GSL \cite{staff06} & LD \cite{rana2020new} & ILU \cite{Pazner2019a} & IRK \\\cline{2-6}
& $\epsilon = 0$ & 70.6 (435) & 21.7 (83) & 113 (403) & \textbf{5.41} (44) \\
& $\epsilon = 10^{-4}$ & 161 (444) & 47.7 (99) & 15.9 (94) & \textbf{11.2} (25) \\
& $\epsilon = 0.01$ & 73.0 (242) & \textbf{34.3} (84) & 281 (703) & 52.3 (100)
\\\cline{2-6}\cline{2-6}
& \textbf{Radau(9)} & GSL \cite{staff06} & LD \cite{rana2020new} & ILU \cite{Pazner2019a} & IRK \\\cline{2-6}
& $\epsilon = 0$ & 40.2 (241) & 14.6 (53) & 25.0 (140) & \textbf{4.72} (23) \\
& $\epsilon = 10^{-4}$ & 89.5 (243) & 31.3 (64) & 13.8 (84) & \textbf{10.3} (23) \\
& $\epsilon = 0.01$ & 47.8 (153) & \textbf{25.6} (61) & 220 (609) & 33.4 (64)
\\\hline\hline
  \end{tabular}
  \caption{Comparison of wallclock time (left) to take 5 time steps, and
  average iteration count per time step (normalized
  as one preconditioner applied to all stages, right in ($\cdot$)) for various
  IRK preconditioners. Fastest wallclock times for each fixed $\epsilon$ are
  shown in bold.}\label{tab:comp_prec}
\end{table}

}

}

\subsection{High-order matrix-free discretization of diffusion}

In this example, we illustrate the use of high-order IRK methods coupled with high-order finite element spatial discretizations.
It is well-known that matrix assembly becomes prohibitively expensive for high-order finite elements.
Naive algorithms typically require $\mathcal{O}(p^{3d})$ operations to assemble the resulting system matrix, where $p$ is the polynomial degree and $d$ is the spatial dimension.
Techniques such as sum factorization can reduce this cost on tensor-product elements to $\mathcal{O}(p^{2d+1})$, however this cost can still be prohibitive for large values of $p$ \cite{Melenk2001}.
On the other hand, matrix-free operator evaluation on tensor-product meshes can be performed in $\mathcal{O}(p^{d+1})$ operations \cite{Orszag1980}, motivating the development of solvers and preconditioners that can be constructed and applied without access to the assembled system matrix \cite{Kronbichler2019}.

We consider a high-order finite element discretization of the linear heat equation on spatial domain $\Omega$,
\[
	\int_\Omega \partial_t (u_h) v_h \, dx + \int_\Omega \nabla u_h \cdot \nabla v_h \, dx = \int_\Omega f v_h \, dx,
\]
where $u_h, v_h \in V_h$, and $V_h$ denotes the degree-$p$ $H^1$-conforming finite element space defined on a mesh $\mathcal{T}$ consisting of tensor-product elements (i.e.\ quadrilaterals or hexahedra).
The matrix-free action of the corresponding operator is computed in $\mathcal{O}(p^{d+1})$ operations using the \textit{partial assembly} features of the MFEM finite element library \cite{Anderson2020}.
In order to precondition the resulting system, we make use of a low-order refined preconditioner, whereby the high-order system is preconditioned using a spectrally equivalent low-order finite element discretization computed on a refined mesh \cite{Canuto2010}.
The low-order refined discretization can be assembled in $\mathcal{O}(p^d)$ time, thereby avoiding the prohibitive costs of high-order matrix assembly.
We make use of the uniform preconditioners for the low-order refined problem based on subspace corrections, developed in \cite{Pazner2019a}.

For this test case, take the spatial domain to be $\Omega = [0,1] \times [0,1]$, with periodic boundary conditions.
We choose the forcing term
\[
	f(x, y, t) = \sin (2\pi x)\cos(2\pi y) \left(\cos(t) + 8 \pi^2 (2 + \sin(t)) \right),
\]
which corresponds to the exact solution
\[
	u(x, y, t) = \sin(2\pi x)\cos(2\pi y)(2 + \sin(t)).
\]
We begin with a very coarse $3 \times 3$ mesh, and integrate in time until $t=0.1$ using the Gauss and Radau IIA methods of orders 2 through 10.
For each test case, the finite element polynomial degree is set to $k-1$, where $k$ is the order of accuracy of the time integration method, resulting in $k$th order convergence in both space and time.
The mesh and time step are refined by factors of two to confirm the high-order convergence in space and time of the method.
The relative $L^2$ error, obtained by comparing against the exact solution, is shown in \Cref{fig:high-order-diff-errors}.

\begin{figure}[!ht]
	\centering
	\includegraphics{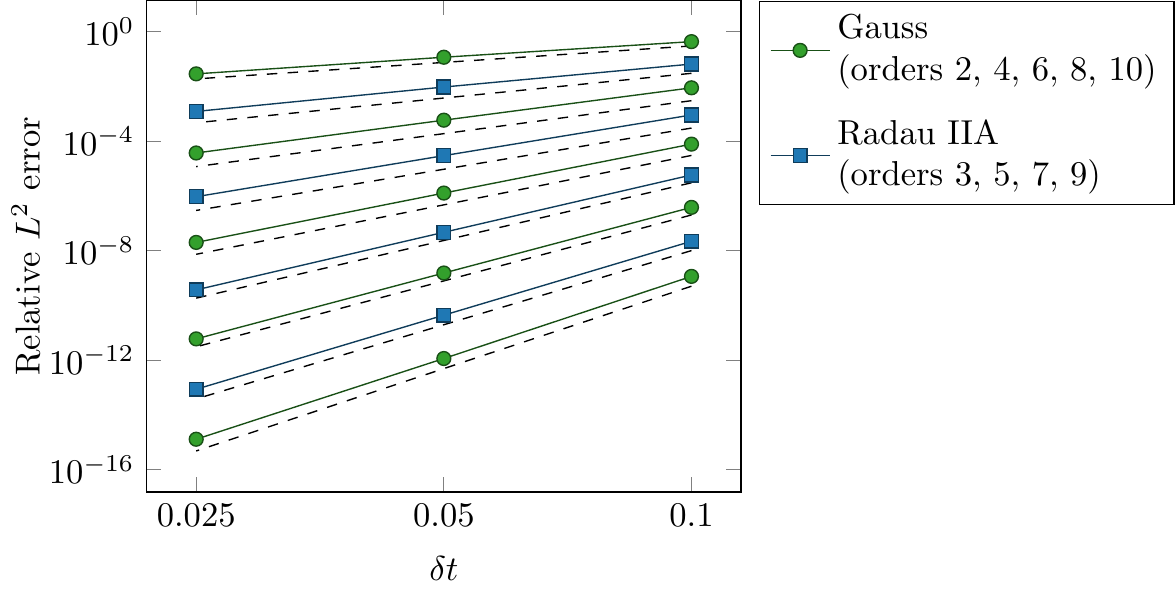}
	\caption{
		High-order convergence in space and time for the matrix-free diffusion problem.
		Gauss and Radau IIA methods of orders 2 through 10 are used.
		The dashed lines indicate the expected rates of convergence for each method.
	}
	\label{fig:high-order-diff-errors}
\end{figure}

We also use this test case to study the effect of inner iterations on the convergence of the iteration solver.
As discussed in \Cref{sec:inexact-precond}, it is important that the underlying preconditioner provides a good approximation of the inverse of the operator.
For that reason, we consider the use of an inner Krylov solver at every iteration.
Since this corresponds to using a variable preconditioner at each iteration, a flexible Krylov method may have to be used for the outer iteration, although in practice good convergence is often still observed using the standard CG method \cite{Notay2000}.
In particular, we compare the total number of preconditioner applications required to converge the outer iteration to a relative tolerance of $10^{-10}$, both with and without an inner Krylov solver.
For the inner Krylov solver, we use a CG iteration with the same relative tolerance as the outer iteration in order to give a good approximation to the inverse of the operator.
The iteration counts are displayed in \Cref{fig:high-order-diff-iters}.
We note that for the fully implicit IRK methods, using an inner Krylov solver can reduce the total number of preconditioner applications by about a factor of 1.5, although this depends on the type of method and order of accuracy.
As expected, the use of inner iterations does not reduce the total number of preconditioner applications for DIRK methods.
In addition, for this test case, the total number of preconditioner applications required for the second and fourth order Gauss IRK methods is between 1.3 and 2 times smaller than those required for the corresponding equal-order DIRK methods.

\begin{figure}[!ht]
	\centering
	\includegraphics{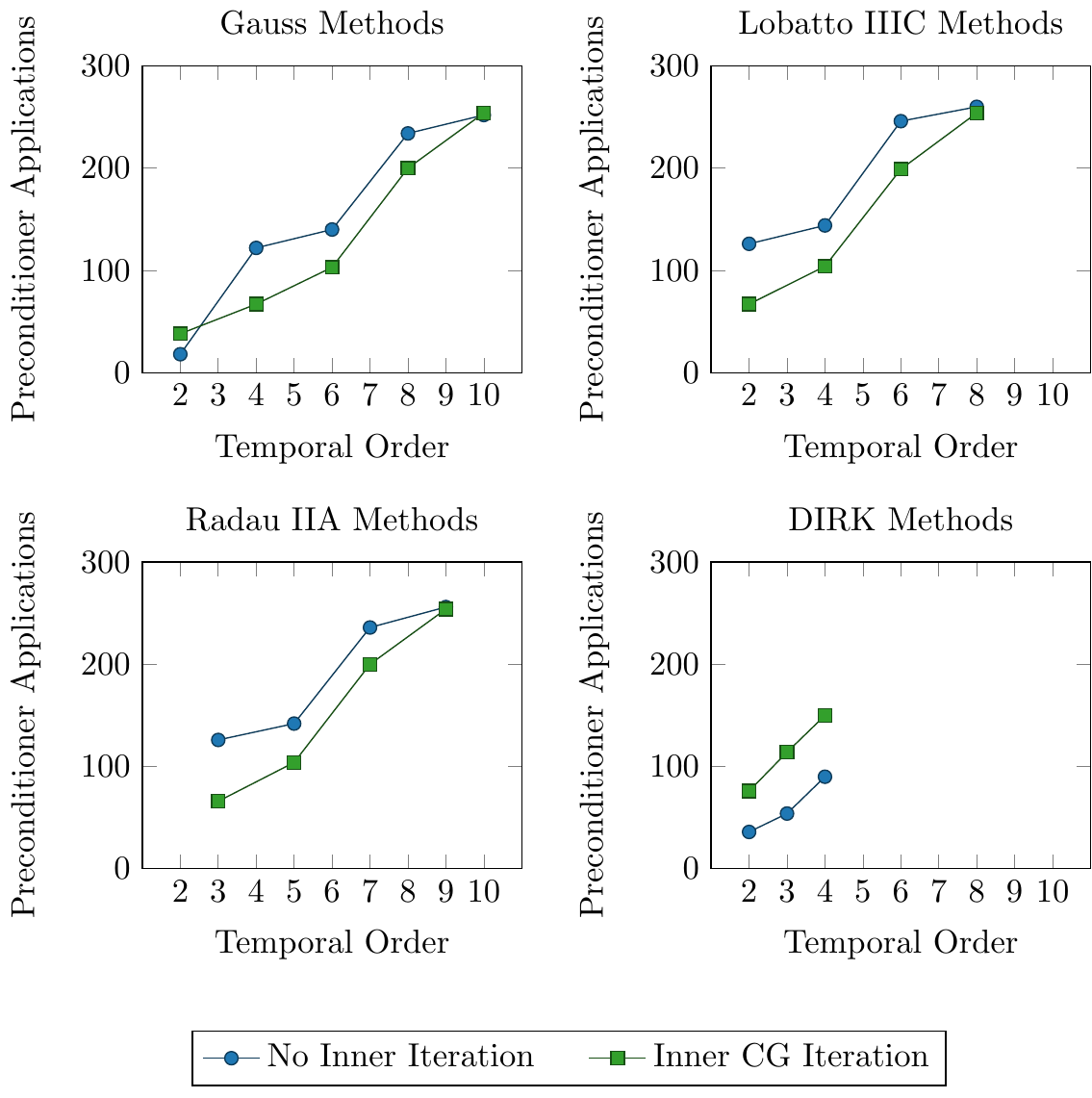}
	\caption{
		Comparison of total number of preconditioner applications with and without
		inner iterations.
		Both the outer iteration and the inner CG iteration are converged to a
		relative tolerance of $10^{-10}$.
	}
	\label{fig:high-order-diff-iters}
\end{figure}

\section{Conclusions}\label{sec:conc}

This paper introduces a theoretical and algorithmic framework for the fast, parallel
solution of fully implicit Runge-Kutta and DG discretizations in time for linear
numerical PDEs. Theory is developed to guarantee the {preconditioned condition
number is bounded by a small, order-one constant} under fairly
general assumptions on the spatial discretization that yield stable time integration.
Numerical results demonstrate the new method on various high-order finite-difference
and finite-element discretizations of linear parabolic and hyperbolic problems,
demonstrating fast, scalable solution of up to 10th order accuracy. In several
cases, the new method can achieve 4th-order accuracy using Gauss integration
with roughly half the number of preconditioner applications as required using
standard SDIRK techniques, {and in many cases the method outperforms previous state
of the art.} Ongoing work involves addressing fully nonlinear
problems and algebraic constraints, in particular, without assuming that the
linear system \eqref{eq:k0} can be expressed in Kronecker-product form (thus
allowing for a true Newton or better Newton-like method compared with the
commonly used/analyzed simplified Newton approach).

\appendix
\section{Proofs}\label{appendix}

\begin{theorem}\label{th:cond}
Suppose \Cref{ass:eig,ass:fov} hold, that is, $\eta > 0$ and $W(\cL) \leq 0$,
and suppose $\cL$ is real-valued. Let $\mathcal{P}_{\delta,\gamma}$ denote
the preconditioned operator as in \eqref{eq:P_gen},
where $[(\eta I  - \cL)^2 + \beta^2 I]$ is preconditioned with
$(\delta I - \cL)^{-1} (\gamma I - \cL)^{-1}$, for $\delta, \gamma \in (0, \infty)$.

Let $\kappa({\cal P}_{\delta, \gamma})$ denote the two-norm condition number of ${\cal P}_{\delta,\gamma}$,
and define $\gamma_*$ by
\begin{align} \label{eq:gamma*_gen}
\gamma_* \coloneqq \frac{\eta^2+\beta^2}{\delta}.
\end{align}
Then
\begin{align} \label{eq:kappa_gen_gamma*_bound}
\kappa(\mathcal{P}_{\delta, \gamma_*}) \leq \frac{1}{2 \eta} \left( \delta + \frac{\eta^2 + \beta^2}{\delta} \right).
\end{align}

Moreover, (i) bound \eqref{eq:kappa_gen_gamma*_bound} is tight {when considered over
all $\cL$ that satisfy \Cref{ass:fov}} in the sense that $\exists$ $\cL$ such that
\eqref{eq:kappa_gen_gamma*_bound} holds with equality, and (ii) $\gamma = \gamma_*$ is optimal
in the sense that, without further assumptions on $\cL$, $\gamma_*$ minimizes a tight
upper bound on $\kappa({\cal P}_{\delta, \gamma})$, with {$\gamma_* =
\argmin_{\gamma\in(0,\infty)} \max_{\cL} \kappa({\cal P}_{\delta, \gamma})$}.
\end{theorem}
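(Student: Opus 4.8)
\emph{Proof plan.} The plan is to pass to $S := -\widehat{\mathcal{L}}$, which by \Cref{ass:fov} is \emph{accretive}: $W(\widehat{\mathcal{L}}) \le 0$ is equivalent to $\operatorname{Re}\langle Sx,x\rangle \ge 0$ for all $x$. The defining identity $\gamma_* = (\eta^2+\beta^2)/\delta$ gives $\delta\gamma_* = \eta^2+\beta^2 = |\mu|^2$ with $\mu := \eta + \mathrm{i}\beta$, so setting $\phi := S^2 + (\eta^2+\beta^2)I$ we may write $(\eta I - \widehat{\mathcal{L}})^2 + \beta^2 I = (S+\mu I)(S+\bar\mu I) = \phi + 2\eta S$ and $(\delta I - \widehat{\mathcal{L}})(\gamma_* I - \widehat{\mathcal{L}}) = (S+\delta I)(S+\gamma_* I) = \phi + (\delta+\gamma_*)S$, whence $\mathcal{P}_{\delta,\gamma_*} = (\phi + 2\eta S)(\phi + (\delta+\gamma_*)S)^{-1}$ (all factors commute, being polynomials in $S$). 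The single computational input is that accretivity forces a cross term to be nonnegative: for every vector $w$, $\operatorname{Re}\langle \phi w, Sw\rangle = \operatorname{Re}\langle S(Sw), Sw\rangle + (\eta^2+\beta^2)\operatorname{Re}\langle Sw, w\rangle \ge 0$, so the scalar quadratic $g_w(\theta) := \|\phi w + \theta Sw\|^2 = \|\phi w\|^2 + 2\theta\operatorname{Re}\langle \phi w, Sw\rangle + \theta^2\|Sw\|^2$ has nonnegative coefficients. Hence $g_w$ is nondecreasing on $[0,\infty)$ and obeys $g_w(\theta_2) \le (\theta_2/\theta_1)^2 g_w(\theta_1)$ for $0 < \theta_1 \le \theta_2$. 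Since $\delta + \gamma_* \ge 2\sqrt{\delta\gamma_*} = 2\sqrt{\eta^2+\beta^2} \ge 2\eta$, applying the first fact with $w = (\phi+(\delta+\gamma_*)S)^{-1}x$ gives $\|\mathcal{P}_{\delta,\gamma_*}\| \le 1$, and the second with $w = (\phi+2\eta S)^{-1}x$ gives $\|\mathcal{P}_{\delta,\gamma_*}^{-1}\| \le (\delta+\gamma_*)/(2\eta)$. The product of these is precisely \eqref{eq:kappa_gen_gamma*_bound}.

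For claim (i) I exhibit a \emph{normal} $\widehat{\mathcal{L}}$ attaining the bound: take $\widehat{\mathcal{L}}$ to be the real $3\times 3$ matrix that is the direct sum of the $1\times 1$ zero matrix and a $2\times 2$ skew-symmetric block with eigenvalues $\pm a\mathrm{i}$, $a := \sqrt{\eta^2+\beta^2}$. Then $W(\widehat{\mathcal{L}})$ is the segment $[-a\mathrm{i},a\mathrm{i}]$, so \Cref{ass:fov} holds, and $\mathcal{P}_{\delta,\gamma_*} = r(\widehat{\mathcal{L}})$ is normal, where $r(z) := \frac{(z-\eta)^2+\beta^2}{(z-\delta)(z-\gamma_*)}$, with eigenvalues $r(0) = (\eta^2+\beta^2)/(\delta\gamma_*) = 1$ and $r(\pm a\mathrm{i})$. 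A direct evaluation using $\delta\gamma_* = a^2$ gives $|r(\pm a\mathrm{i})| = 2\eta/(\delta+\gamma_*) \le 1$, so $\|\mathcal{P}_{\delta,\gamma_*}\| = 1$ and $\|\mathcal{P}_{\delta,\gamma_*}^{-1}\| = (\delta+\gamma_*)/(2\eta)$, i.e., \eqref{eq:kappa_gen_gamma*_bound} holds with equality.

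For claim (ii), fix $\delta$ and let $r_\gamma(z) := \frac{(z-\eta)^2+\beta^2}{(z-\delta)(z-\gamma)}$; for $\gamma > 0$ this is analytic and zero-free on the closed left half plane $\overline{\mathbb{C}^-} \supseteq W(\widehat{\mathcal{L}})$ with $r_\gamma(\infty) = 1$, so by the maximum modulus principle on the Riemann sphere $\sup_{\overline{\mathbb{C}^-}}|r_\gamma|$ and $\inf_{\overline{\mathbb{C}^-}}|r_\gamma|$ are attained on $\mathrm{i}\mathbb{R} \cup \{0,\infty\}$; with $t = y^2$ these equal $\sqrt{\sup_t\rho_\gamma(t)}$ and $\sqrt{\inf_t\rho_\gamma(t)}$ for the rational function $\rho_\gamma(t) := \frac{(t-(\eta^2+\beta^2))^2 + 4\eta^2 t}{(t+\delta^2)(t+\gamma^2)}$, $t \ge 0$. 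A normal-matrix construction as in (i), with spectrum placed near the maximizer of $|r_\gamma|$ and near that of $1/r_\gamma$, yields $\sup_{\widehat{\mathcal{L}}}\kappa(\mathcal{P}_{\delta,\gamma}) \ge f(\gamma) := \sqrt{\sup_t\rho_\gamma / \inf_t\rho_\gamma}$ (the sup over all $\widehat{\mathcal{L}}$ satisfying \Cref{ass:fov}), for every $\gamma > 0$. It then remains to minimize the scalar $f(\gamma)$ over $\gamma \in (0,\infty)$: differentiating the ratio-of-quadratics $\rho_\gamma$ (whose interior extrema solve a quadratic in $t$) gives $\sup_t\rho_\gamma$ and $\inf_t\rho_\gamma$ in closed form, and since $\rho_\gamma(0) = ((\eta^2+\beta^2)/(\delta\gamma))^2$ and $\rho_\gamma(\infty) = 1$ coincide exactly when $\gamma = \gamma_*$, the minimum is at $\gamma = \gamma_*$ with $f(\gamma_*) = (\delta+\gamma_*)/(2\eta)$. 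Combining with (i) and \eqref{eq:kappa_gen_gamma*_bound}, $\sup_{\widehat{\mathcal{L}}}\kappa(\mathcal{P}_{\delta,\gamma}) \ge f(\gamma_*) = \sup_{\widehat{\mathcal{L}}}\kappa(\mathcal{P}_{\delta,\gamma_*})$, so $\gamma_* \in \argmin_{\gamma}\sup_{\widehat{\mathcal{L}}}\kappa(\mathcal{P}_{\delta,\gamma})$.

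The genuinely routine steps are the half-plane/maximum-modulus reductions and the one-variable optimization of $f(\gamma)$ in (ii); that optimization — keeping track of whether the extremum of $\rho_\gamma$ sits at $t=0$, at $t=\infty$, or at an interior critical point as $\gamma$ varies — is the only part I expect to require careful bookkeeping. Everything else rests on the single cheap observation that accretivity of $S$ makes the cross term $\operatorname{Re}\langle\phi w, Sw\rangle$ nonnegative, which converts the operator-norm estimates into a monotonicity statement about a scalar quadratic in $\theta$.
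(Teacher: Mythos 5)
Your upper-bound and tightness arguments are correct, and they are essentially the paper's own proof in different clothing. Writing $\mathcal{P}_{\delta,\gamma_*}=(\phi+2\eta S)(\phi+(\delta+\gamma_*)S)^{-1}$ with $S=-\cL$ and using accretivity to make the cross term $\operatorname{Re}\langle \phi w, Sw\rangle$ nonnegative is the same computation as the paper's expansion of $\Vert\mathcal{P}_{\delta,\gamma}v\Vert^2/\Vert v\Vert^2$ under the substitution $v=(\gamma I-\cL)(\delta I-\cL)w$, with the choice $\gamma=\gamma_*$ playing exactly the role of making the paper's coefficient $c_0$ equal to one; your monotone-quadratic-in-$\theta$ packaging is a clean reorganization rather than a different idea. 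Likewise your extremal example is the paper's: a matrix with spectrum $\{0,\pm\mathrm{i}\sqrt{\delta\gamma_*}\}$ on the imaginary axis (your explicit skew-symmetric realization is a nice concrete touch, since the paper only asserts "any matrix with these eigenvalues").

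The genuine gap is in part (ii). You correctly reduce, via normal matrices with imaginary spectrum, to the scalar problem of showing $f(\gamma)=\sqrt{\sup_t\rho_\gamma/\inf_t\rho_\gamma}\ \ge\ (\delta+\gamma_*)/(2\eta)$ for every $\gamma>0$, but you never prove this inequality: the justification offered --- that $\rho_\gamma(0)$ and $\rho_\gamma(\infty)$ coincide exactly when $\gamma=\gamma_*$ --- is an equalization heuristic, not an argument, and it does not rule out that the relevant extrema of $\rho_\gamma$ sit at interior critical points in a way that lets some $\gamma\neq\gamma_*$ do better; the closed-form sup/inf you propose to compute is precisely the bookkeeping you defer. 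The paper closes this without ever computing $\sup_t$ or $\inf_t$: for $0<\gamma<\gamma_*$ it lower-bounds $\kappa^2(\mathcal{P}_{\delta,\gamma})$ by the ratio ${\cal H}_{\delta,\gamma}(0)/{\cal H}_{\delta,\gamma}(\pm\sqrt{\delta\gamma_*})$ (in your variables, $\rho_\gamma(0)/\rho_\gamma(\delta\gamma_*)$), and for $\gamma>\gamma_*$ by ${\cal H}_{\delta,\gamma}(\pm\infty)/{\cal H}_{\delta,\gamma}(\pm\sqrt{\delta\gamma_*})$, and each comparison with $[(\delta+\gamma_*)/(2\eta)]^2$ reduces to an explicit polynomial inequality in $\gamma,\gamma_*,\delta$ that is manifestly strict for $\gamma\neq\gamma_*$ (see \eqref{eq:kappa_gen_lwr_bnd1} and \eqref{eq:kappa_gen_lwr_bnd2}). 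Adopting those two test evaluations fills your gap with a few lines of algebra, and the strictness is what upgrades your conclusion from "$\gamma_*$ lies in the argmin" to the uniqueness implicit in the theorem's statement $\gamma_*=\argmin_{\gamma}\max_{\cL}\kappa(\mathcal{P}_{\delta,\gamma})$, which your plan as written would not deliver even if the deferred optimization were completed, unless you additionally show $f(\gamma)>f(\gamma_*)$ for all $\gamma\neq\gamma_*$.
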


\begin{proof}
We prove this theorem via a sequence of three lemmas, successively proving the
upper bound in \eqref{eq:kappa_gen_gamma*_bound}, followed by the tightness of
this bound, followed by the optimality of $\gamma_*$.

\begin{lemma}[Upper bound]
Under the assumptions of \Cref{th:cond},
$\kappa(\mathcal{P}_{\delta, \gamma_*}) \leq
\frac{1}{2 \eta} \left( \delta + \frac{\eta^2 + \beta^2}{\delta} \right)$
\eqref{eq:kappa_gen_gamma*_bound}.
\end{lemma}
\begin{proof}
The square of the condition number of ${\cal P}_{\delta, \gamma}$ is given by

\begin{align}
\label{eq:kappa_gen_def}
\kappa^2({\cal P}_{\delta,\gamma})
=
\Vert {\cal P}_{\delta,\gamma} \Vert^2
\Vert {\cal P}_{\delta,\gamma}^{-1} \Vert^2
=
\underset{{\bm{v} \neq 0}}{\max} \frac{\Vert {\cal P}_{\delta,\gamma} \bm{v} \Vert^2 }{\Vert \bm{v} \Vert^2}
\frac{1}{\displaystyle{\underset{{\bm{v} \neq 0}}{\min} \frac{\Vert {\cal P}_{\delta,\gamma} \bm{v} \Vert^2 }{\Vert \bm{v} \Vert^2}}},
\end{align}

where for real-valued $\cL$, the max and min can be obtained by restricting ourselves
to real-valued $\bm{v}$. The key step in establishing \eqref{eq:kappa_gen_gamma*_bound} is bounding
$\Vert {\cal P}_{\delta,\gamma} \Vert^2$ and $\Vert {\cal P}_{\delta,\gamma}^{-1} \Vert^2$ from above
by bounding $\Vert {\cal P}_{\delta,\gamma} \bm{v} \Vert^2 / \Vert \bm{v} \Vert^2$
from above and below, respectively.

Consider the form of the preconditioned operator ${\cal P}_{\delta,\gamma}$ in
\eqref{eq:P_gen} and make the substitution $\bm{v} \mapsto (\gamma I - \cL) (\delta I - \cL) \bm{w}$.
Using the fact that rational functions of $\mathcal{L}$ commute,
$\|{\cal P}_{\delta,\gamma} \bm{v}\|^2$ can be expanded for real-valued
$\bm{v}$ (and, thus, real-valued $\bm{w}$) as

\begin{align}
\label{eq:norm_Pv_gen}
\begin{split}
\Vert {\cal P}_{\delta, \gamma} \bm{v} \Vert^2 &=
	\bVert [( \eta I - \cL)^2 + \beta^2] \bm{w} \bVert^2, \\
& = \bVert [(\eta^2+\beta^2)\bm{w} - 2\eta\cL\bm{w} + \cL^2\bm{w} \bVert^2 \\
&= \bVert (\eta^2 + \beta^2) \bm{w} + \cLs \bm{w} \bVert^2
	-4 \eta (\eta^2 +\beta^2) \langle \cL \bm{w}, \bm{w} \rangle
	\\&\quad\quad
	-4 \eta \langle \cL (\cL \bm{w}), \cL \bm{w} \rangle
	+4 \eta^2 \bVert \cL \bm{w} \bVert^2.
\end{split}
\end{align}

Similarly, expanding $\|\bm{v}\|^2$ yields

\begin{align}
\label{eq:norm_v_gen}
\begin{split}
\Vert \bm{v} \Vert^2
&= \bVert ( \gamma I - \cL) ( \delta I - \cL) \bm{w} \bVert^2, \\
&= \bVert \delta\gamma \bm{w} -(\delta+\gamma)\cL\bm{w} + \cL^2\bm{w} \bVert^2, \\
&= \bVert \delta \gamma \bm{w} + \cLs \bm{w} \bVert^2
	- 2 \delta \gamma (\delta + \gamma ) \langle \cL \bm{w}, \bm{w} \rangle
	\\&\quad\quad
	-2  (\delta + \gamma ) \langle \cL ( \cL \bm{w}), \cL \bm{w} \rangle
	+  (\delta + \gamma)^2 \bVert \cL \bm{w} \bVert^2.
\end{split}
\end{align}

Thus, the key ratio in \eqref{eq:kappa_gen_def} takes the form
\begin{align}
\label{eq:P_gen_frac}
\frac{\Vert {\cal P}_{\delta,\gamma} \bm{v} \Vert^2}{\Vert \bm{v} \Vert^2}
=
\frac{
c_0(\bm{w}) f_0(\bm{w}) + c_1 f_1(\bm{w}) + c_2 f_2(\bm{w}) + c_3 f_3(\bm{w})
}
{
f_0(\bm{w}) + f_1(\bm{w}) + f_2(\bm{w}) + f_3(\bm{w})
},
\end{align}
where for $\delta, \gamma > 0$, we have defined the functions and constants
\begin{equation}
\begin{aligned}
\label{eq:f_c_gen_def}
f_0 &\coloneqq \bVert \delta \gamma \bm{w} + \cLs \bm{w} \bVert^2 \geq 0,
\quad
&&c_0\coloneqq \frac{\bVert (\eta^2 + \beta^2) \bm{w} + \cLs \bm{w} \bVert^2}{\bVert \delta \gamma \bm{w} + \cLs \bm{w} \bVert^2} \geq 0,
\\
f_1 &\coloneqq -2 \delta \gamma (\delta + \gamma)  \langle \cL \bm{w}, \bm{w} \rangle \geq 0,
\quad
&&c_1\coloneqq \frac{\eta^2 + \beta^2}{\delta \gamma} \frac{2 \eta}{\delta + \gamma} > 0,\\
f_2 &\coloneqq -2( \delta + \gamma) \langle \cL( \cL \bm{w}), \cL \bm{w} \rangle \geq 0,
\quad
&&c_2\coloneqq \frac{2 \eta}{\delta + \gamma} > 0,\\
f_3 &\coloneqq (\delta+\gamma)^2 \bVert \cL \bm{w} \bVert^2 \geq 0,
\quad
&&c_3\coloneqq \left(\frac{2 \eta}{\delta + \gamma}\right)^2 > 0.
\end{aligned}
\end{equation}
Note that functions $f_1$ and $f_2$ are non-negative by
assumption of $W( \cL ) \leq 0$, while for all $\bm{w}\neq\mathbf{0}$, it must hold that
either $c_0f_0 > 0$ or $c_3f_3 > 0$ (or both, because $c_3f_3 = 0$ i.f.f.
$\cL\bm{w} = \mathbf{0}$, which implies $c_0f_0 > 0$ for $\bm{w}\neq\mathbf{0}$).

Since all of the addends in the numerator and denominator of \eqref{eq:P_gen_frac} are non-negative, and at least one addend in each is positive, \eqref{eq:P_gen_frac} can be bounded as
\begin{align*}
\min \{ c_0, c_1, c_2, c_3 \} \eqqcolon c_{\min}
\leq
\frac{\Vert {\cal P}_{\delta,\gamma} \bm{v} \Vert^2}{\Vert \bm{v} \Vert^2}
\leq c_{\max}
\coloneqq \max \{ c_0, c_1, c_2, c_3 \}.
\end{align*}
Applying these bounds to the norms in \eqref{eq:kappa_gen_def} yields
\begin{align} \label{eq:P_gen_bounds}
\Vert {\cal P}_{\delta, \gamma} \Vert \leq \sqrt{c_{\max}},
\quad
\Vert {\cal P}_{\delta, \gamma}^{-1} \Vert \leq \frac{1}{\sqrt{c_{\min}}}.
\end{align}

Bounding $c_0$ for general $\gamma$, and hence $c_{\min}$ and $c_{\max}$, is
difficult because the sign of $\langle \cLs \bm{w}, \bm{w} \rangle$ (which
appears in expanding $\bVert \delta \gamma \bm{w} + \cLs \bm{w} \bVert^2$) is not
known for general $\cL$, noting that the sign of $W(\cL)$ does not determine
that of $W(\cLs)$. However, observe from \eqref{eq:f_c_gen_def} that the judicious
choice of $\gamma = \gamma_* \coloneqq (\eta^2 + \beta^2)/\delta$ yields $c_0(\bm{w}) = 1$.
Moreover, in the final part of this proof we demonstrate that
$\gamma = \gamma_*$ is optimal, and, as such, moving forward we
only consider the case $\gamma = \gamma_*$.

Letting $\gamma = \gamma_* \coloneqq (\eta^2 + \beta^2)/\delta$, from
\eqref{eq:f_c_gen_def} one has $c_0 = 1 \geq c_1 = c_2 = \sqrt{c_3} = 2
\eta/(\delta + \gamma_*)$, where the inequality $1 \geq 2 \eta/(\delta +
\gamma_*)$ follows by noting the equivalent relation $\delta^2
-2\eta\delta+\eta^2+\beta^2 \geq 0$ for all $\eta,\delta>0$. Thus, for $\gamma =
\gamma_*$, the bounds in \eqref{eq:P_gen_bounds} are given by
\begin{align}
\label{eq:P_gen_gamma*_bounds}
\Vert {\cal P}_{\delta,\gamma_*} \Vert \leq 1,
\quad
\Vert {\cal P}_{\delta,\gamma_*}^{-1} \Vert
\leq \frac{\delta + \gamma_*}{2 \eta}
= \frac{1}{2 \eta} \left( \delta + \frac{\eta^2 + \beta^2}{\delta} \right)
\end{align}
Applying these bounds to the condition number \eqref{eq:kappa_gen_def}
yields the upper bound in \eqref{eq:kappa_gen_gamma*_bound}.
\end{proof}

We now show that bound \eqref{eq:kappa_gen_gamma*_bound} is tight.
We do so by construction, showing that the
bound in \eqref{eq:kappa_gen_gamma*_bound} is achieved for
certain matrices that satisfy \Cref{ass:fov}.
\begin{lemma}[Tightness]
$\exists$ $\cL$ such that \eqref{eq:kappa_gen_gamma*_bound} holds with equality.
\end{lemma}
\begin{proof}
Note that the min/max
of $\Vert {\cal P}_{\delta,\gamma} \bm{v} \Vert^2/\Vert \bm{v} \Vert^2$ over $\bm{v}$
for real-valued ${\cal P}_{\delta,\gamma}$ is equivalent when minimizing over real
or complex $\bm{v}$; we now consider complex $\bm{v}$ for theoretical
purposes. To that end, let $~\bm{v}=(\gamma I - \cL)(\delta I - \cL) \bm{w}$, but
suppose that $(\textrm{i} \xi, \bm{w})$ is an eigenpair of $\cL$, with $\xi$
a real number and $\bm{w}$ a complex eigenvector. Plugging into
$\| {\cal P}_{\delta,\gamma} \bm{v}\|^2$ \eqref{eq:norm_Pv_gen} and $\|\bm{v}\|^2$
\eqref{eq:norm_v_gen}, and taking the ratio as in \eqref{eq:P_gen_frac},
define the following function of $\xi$:

\begin{align} \label{eq:H_gen_def}
{\cal H}_{\delta,\gamma}(\xi)
\coloneqq
\left. \frac{\Vert {\cal P}_{\delta, \gamma} \bm{v} \Vert^2 }{\Vert \bm{v} \Vert^2} \right|_{\cL \bm{w} = \mathrm{i} \xi \bm{w}}
=
\frac{|(\eta- \mathrm{i} \xi)^2 + \beta^2|^2}{|(\delta \gamma - \xi^2 - \mathrm{i} (\delta + \gamma) \xi|^2}
=
\frac{(\delta \gamma_* - \xi^2)^2 + (2 \eta \xi)^2}{(\delta \gamma-\xi^2)^2 + [\xi ( \delta + \gamma) ]^2},
\end{align}
where we have made use of $\delta \gamma_* = \eta^2 + \beta^2$.
By virtue of restricting that $\bm{w}$ be an eigenvector, from \eqref{eq:kappa_gen_def} we have
\begin{align}
\label{eq:H_gen_bounds}
\frac{1}{\Vert {\cal P}_{\delta,\gamma}^{-1} \Vert^2}
=
\underset{\bm{v} \neq 0}{\min} \frac{\Vert {\cal P}_{\delta,\gamma} \bm{v} \Vert^2}{\Vert \bm{v} \Vert^2}
\leq {\cal H}_{\delta,\gamma}(\xi) \leq
\underset{\bm{v} \neq 0}{\max} \frac{\Vert {\cal P}_{\delta,\gamma} \bm{v} \Vert^2}{\Vert \bm{v} \Vert^2} = \Vert {\cal P}_{\delta,\gamma} \Vert^2.
\end{align}

That is, any value of $1/{\cal H}_{\delta,\gamma}(\xi)$ serves as a lower bound on
$\Vert {\cal P}_{\delta,\gamma}^{-1} \Vert^2$, while any value of ${\cal
H}_{\delta,\gamma}(\xi)$ serves as a lower bound on $\Vert {\cal P}_{\delta,\gamma} \Vert^2$.
Therefore, the ratio of any two values of ${\cal H}_{\delta,\gamma}(\xi)$ provides a
lower bound on $\kappa^2({\cal P}_{\delta,\gamma})$.

We now show that bound \eqref{eq:kappa_gen_gamma*_bound} on $\kappa({\cal P}_{\delta,\gamma_*})$
is tight. Considering \eqref{eq:H_gen_def} at the judiciously chosen eigenvalues of
$\mathrm{i} \xi = \{ 0, \pm \mathrm{i} \sqrt{\delta \gamma_*}\}$, we have

\begin{align}
\label{eq:H1_gen}
{\cal H}_{\delta,\gamma}(0)
	&= \frac{\gamma_*^2}{\gamma^2},
\hspace{5ex}
{\cal H}_{\delta,\gamma}(\pm \sqrt{\delta \gamma_*}) =
	\frac{(2 \eta)^2 \gamma_*}{\delta( \gamma- \gamma_*)^2 + \gamma_* (\delta + \gamma)^2 }.
\end{align}

First observe from \eqref{eq:H_gen_bounds} and \eqref{eq:H1_gen} that $\Vert {\cal
P}_{\delta,\gamma_*} \Vert^2 \geq {\cal H}_{\delta,\gamma_*}(0) = 1$, and thus the upper bound
on $\Vert {\cal P}_{\delta,\gamma_*} \Vert$ from \eqref{eq:P_gen_gamma*_bounds} achieves
equality for a matrix $\cL$ having an eigenvalue of $\xi = 0$.
Secondly, observe from \eqref{eq:H_gen_bounds} and \eqref{eq:H1_gen} that $\Vert {\cal
P}_{\delta,\gamma_*}^{-1} \Vert^2 \geq 1/{\cal H}_{\delta,\gamma_*}(\pm \sqrt{\delta  \gamma_*})= [(\delta + \gamma_*)/(2\eta)]^2$, and thus the upper bound on $\Vert {\cal
P}_{\delta,\gamma_*}^{-1} \Vert$ from \eqref{eq:P_gen_gamma*_bounds} achieves equality for
a matrix $\cL$ having eigenvalues $\mathrm{i} \xi = \pm \mathrm{i} \sqrt{\delta \gamma_*}$.
Therefore, bound \eqref{eq:kappa_gen_gamma*_bound} on $\kappa({\cal P}_{\delta,\gamma_*})$
achieves equality for any matrix $\cL$ having eigenvalues $\{0, \pm
\mathrm{i}\sqrt{\delta \gamma_*}\}$.\footnote{By nature of the continuity of
eigenvalues and continuity of $\mathcal{H}_{\delta,\gamma}(\xi)$ at $\xi = 0$, there
also exist nonsingular matrices with condition number within
$\epsilon$ of \eqref{eq:kappa_gen_gamma*_bound} for any $\epsilon > 0$.}
\end{proof}

Last, having shown that \eqref{eq:kappa_gen_gamma*_bound} is tight, we now
show that  $\gamma = \gamma_*$ is optimal in terms of minimizing the maximum
condition number of all $\cL$ that satisfy \Cref{ass:fov}, by showing that
for $\gamma\neq\gamma_*$, $\exists$ matrices $\cL$ for which
$\kappa({\cal P}_{\delta,\gamma}) > (\delta+\gamma_*)/2\eta$.
\begin{lemma}[Optimal $\gamma_*$]
$\gamma_* =\argmin_{\gamma\in(0,\infty)} \max_{\cL} \kappa({\cal P}_{\delta, \gamma})$
\end{lemma}
\begin{proof}
Once again, consider the matrix $\cL$ from above
with eigenvalues $\{0, \pm \mathrm{i}\sqrt{\delta \gamma_*} \}$, such that
$\kappa({\cal P}_{\delta,\gamma_*})
=
(\delta + \gamma_*)/(2\eta)$. Now, observe from this, \eqref{eq:H_gen_bounds},
and \eqref{eq:H1_gen}, that for $0 < \gamma < \gamma_*$,

\begin{align}
\label{eq:kappa_gen_lwr_bnd1}
\kappa^2({\cal P}_{\delta,\gamma_*})
<
\frac{\gamma_*[\delta(\gamma - \gamma_*)^2 + \gamma_*(\delta + \gamma)^2]}{(2 \eta \gamma)^2}
=
\frac{{\cal H}_{\delta,\gamma}(0)}{{\cal H}_{\delta,\gamma}(\pm \sqrt{\delta \gamma_*})}
\leq
\kappa^2({\cal P}_{\delta,\gamma}),
\end{align}
by noting that the first inequality in \eqref{eq:kappa_gen_lwr_bnd1} is equivalent to
$\gamma_*(\gamma_* - \gamma)^2 + (\gamma_* - \gamma) [2 \gamma_*  \gamma + \delta(\gamma_* + \gamma)] > 0$, which is clearly true when $0 < \gamma < \gamma_*$.
Now suppose that $\cL$ has eigenvalues $\mathrm{i} \xi \to \pm \mathrm{i} \infty$,
which, when substituted into \eqref{eq:H_gen_def}, yields
$\lim_{\xi \to \pm \infty} {\cal H}_{\delta,\gamma}(\xi) = 1$.
Combining with \eqref{eq:H_gen_bounds} and \eqref{eq:H1_gen}, we have
for $\gamma_* < \gamma < \infty$,

\begin{align}
\label{eq:kappa_gen_lwr_bnd2}
\kappa^2({\cal P}_{\delta,\gamma_*})
<
\frac{\delta(\gamma - \gamma_*)^2 + \gamma_*(\delta + \gamma)^2}{(2 \eta )^2 \gamma_*}
=
\frac{{\cal H}_{\delta,\gamma}(\pm \infty)}{{\cal H}_{\delta,\gamma}(\pm \sqrt{\delta \gamma_*})}
\leq
\kappa^2({\cal P}_{\delta,\gamma}),
\end{align}
by noting that the first inequality in \eqref{eq:kappa_gen_lwr_bnd2} is equivalent to
$\delta (\gamma - \gamma_*)^2 + \gamma_*(\gamma - \gamma_*) (2 \delta + \gamma_* + \gamma) > 0$, which is clearly satisfied for $\gamma > \gamma_*$.

By construction in \eqref{eq:kappa_gen_lwr_bnd1} and \eqref{eq:kappa_gen_lwr_bnd2},
we have shown that for all $\gamma \in (0, \infty) \setminus \gamma_*$, there exist
matrices $\cL$ such that $\kappa({\cal P}_{\delta,\gamma}) > \kappa({\cal
P}_{\delta,\gamma_*}) = (\delta + \gamma_*)/(2\eta)$. It therefore holds for general $\cL$
satisfying \Cref{ass:fov} that a tight upper bound on $\kappa({\cal P}_{\delta,\gamma})$
for $\gamma \in (0, \infty) \setminus \gamma_*$ must be larger than the tight upper
bound of $\kappa({\cal P}_{\delta,\gamma_*}) \leq (\delta + \gamma_*)/(2\eta)$. Hence $\gamma=\gamma_*$ is the
minimizer over $\gamma \in (0, \infty)$ of a tight upper bound on $\kappa({\cal
P}_{\delta,\gamma})$.
\end{proof}
\end{proof}

\begin{proof}[Proof of \Cref{cor:cond}]
From \Cref{th:cond}, a tight upper bound on the condition number of ${\cal
P}_{\delta,\gamma}$ \eqref{eq:P_gen} {over all $\cL$} is minimized with respect to $\gamma$ when
$\gamma = \gamma_*$ \eqref{eq:gamma*_gen}, with its minimum value given by
\eqref{eq:kappa_gen_gamma*_bound}. To minimize bound
\eqref{eq:kappa_gen_gamma*_bound} with respect to $\delta$, we differentiate it
and observe for $\delta > 0$ that there is only one critical point at $\delta =
\sqrt{\eta^2+ \beta^2}$. Since this function is increasing as $\delta \to 0^+$
and $\delta \to \infty$, this critical point must be a local minimum. Therefore,
the tight upper bound \eqref{eq:kappa_gen_gamma*_bound} is minimized when
$\delta = \sqrt{\eta^2+ \beta^2}$. Substituting $\delta =  \sqrt{\eta^2+
\beta^2}$ into \eqref{eq:gamma*_gen} yields $\gamma_* = \sqrt{\eta^2 +
\beta^2}$. Finally, substituting $\delta = \gamma_* = \sqrt{\eta^2 + \beta^2}$
into \eqref{eq:kappa_gen_gamma*_bound} and noting that  ${\cal
P}_{\gamma,\gamma}$ \eqref{eq:P_gen} is equivalent to ${\cal P}_{\gamma}$
\eqref{eq:P_gamma} yields bound \eqref{eq:kappa_gamma*}.
\end{proof}

\section*{Acknowledgments}

This work was performed under the auspices of the U.S.\ Department of Energy by Lawrence Livermore National Laboratory under Contract DE-AC52-07NA27344 (LLNL-JRNL-817946).
Los Alamos National Laboratory report number LA-UR-20-30412.
This document was prepared as an account of work sponsored by an agency of the United States government.
Neither the United States government nor Lawrence Livermore National Security, LLC, nor any of their employees makes any warranty, expressed or implied, or assumes any legal liability or responsibility for the accuracy, completeness, or usefulness of any information, apparatus, product, or process disclosed, or represents that its use would not infringe privately owned rights.
Reference herein to any specific commercial product, process, or service by trade name, trademark, manufacturer, or otherwise does not necessarily constitute or imply its endorsement, recommendation, or favoring by the United States government or Lawrence Livermore National Security, LLC.
The views and opinions of authors expressed herein do not necessarily state or reflect those of the United States government or Lawrence Livermore National Security, LLC, and shall not be used for advertising or product endorsement purposes.

\bibliographystyle{siamplain}
\bibliography{refs2.bib}

\end{document}